\documentclass[11pt,reqno]{amsart}
\usepackage{amsmath,amssymb,amsthm}
\usepackage{geometry}

\usepackage{booktabs}
\usepackage{microtype}
\usepackage[hidelinks]{hyperref}
\hypersetup{pdftitle={Skew CNS and sharp Ax--Katz congruences},pdfauthor={}}
\numberwithin{equation}{section}
\newtheorem{theorem}{Theorem}[section]
\newtheorem{proposition}[theorem]{Proposition}
\newtheorem{lemma}[theorem]{Lemma}
\newtheorem{corollary}[theorem]{Corollary}
\theoremstyle{definition}

\newtheorem{example}[theorem]{Example}
\theoremstyle{remark}
\newtheorem{remark}[theorem]{Remark}
\newcommand{\T}{\mathcal T}
\newcommand{\rk}{\operatorname{rk}}
\newcommand{\F}{\mathbb F}

\newcommand{\vp}{\nu_p}
\newcommand{\Span}{\operatorname{span}}
\newcommand{\Nm}{\operatorname{N}}

\newcommand{\ceil}[1]{\left\lceil #1\right\rceil}

\newcommand{\supp}{\operatorname{supp}}
\newcommand{\Ccount}{\mathcal C}
\newcommand{\sigmaspace}{\Sigma_n}
\title[Skew CNS and sharp Ax--Katz congruences]{From skew Combinatorial Nullstellensatz\\to sharp Ax--Katz congruences and zero counts}
\date{26 October 2026}
\author{Shuang Li}
\author{Xuan Pang}
\author{Pingzhi Yuan}
\hypersetup{pdfauthor={Xuan Pang, Shuang Li, Pingzhi Yuan}}
\keywords{Skew polynomial ring, Combinatorial Nullstellensatz, Chevalley--Warning theorem, Ax--Katz theorem, finite field, norm form}
\begin{document}
\raggedbottom
\begin{abstract}
We give a unified account of zero counting in finite-field polynomial rings with commuting variables twisted by a common automorphism. A decomposition into fixed-field slices connects skew evaluation with ordinary polynomial evaluation. First, we derive a slice coefficient formula from the skew Combinatorial Nullstellensatz and use it to deduce skew Chevalley--Warning, including the sufficient condition $n>h\sum_i d_i$. Next, classical Ax--Katz gives the corrected integer congruence $Q^{\max\{0,\lceil(n-h\sum_i d_i)/D\rceil\}}\mid N+(M-1)\delta$. We construct homogeneous examples proving optimality for every positive degree table and every number of variables. We also establish a sharp lower bound with a prescribed zero, an input-sensitive coefficient-filtration refinement, and transfers of known bounds for nonlinear zero sets. Detailed calculations explain the origin correction, incomplete norm blocks, and the distinction between characteristic-$p$ summation and higher integer divisibility. Existing geometric, interpolation, and counting theorems are explicitly attributed.
\end{abstract}
\maketitle

\section{Introduction}

The number of common zeros of low-degree polynomials over a finite field is constrained by the Chevalley--Warning and Ax--Katz theorems. In a skew polynomial ring, the evaluation and the ambient set of points must first be specified. We work in the setting of Alon, Behajaina and Paran \cite{ABP}: the variables commute with one another and act on the coefficient field through a common automorphism.

Throughout the paper, let
\begin{equation}\label{eq:fields}
 K=\F_Q,\qquad Q=p^a,\qquad F=\F_{Q^h},\qquad
 K=F^\sigma,
\end{equation}
where $p$ is prime, $q=Q^h=|F|$, and $\sigma$ has order $h$. Thus $\sigma$ may be any generator of $\operatorname{Gal}(F/K)$; it need not be the $Q$-power Frobenius. Let
\[
 R=F[x_1,\ldots,x_n;\sigma],\qquad x_i c=\sigma(c)x_i,
 \qquad x_i x_j=x_j x_i.
\]
We assume $n,r\ge1$. Each polynomial is written with its coefficients on the left. We consider nonzero polynomials $f_1,\ldots,f_r\in R$ of positive total degrees $d_1,\ldots,d_r$. Zero equations may be deleted, and a nonzero constant equation gives an empty zero set. These conventions account separately for systems not satisfying the positive-degree assumption.

Put
\begin{equation}\label{eq:parameters}
 M=\frac{Q^h-1}{Q-1},\qquad E=h\sum_{i=1}^r d_i,
 \qquad D=\max_i d_i,\qquad
 \mu=\max\left\{0,\ceil{\frac{n-E}{D}}\right\}.
\end{equation}
Write $\vp$ for the $p$-adic valuation of integers, with $\vp(0)=+\infty$. The compatible space $\sigmaspace$ is defined in Section~\ref{sec:slices}. If $N$ is the number of common skew zeros on this space and $\delta\in\{0,1\}$ indicates whether the origin is a common zero, the principal congruence is
\begin{equation}\label{eq:introcongruence}
 Q^\mu\mid\Ccount,\qquad \Ccount=N+(M-1)\delta.
\end{equation}
The correction in $\Ccount$ is forced by the common origin of the fixed-field slices. For $h>1$, $\delta=1$ and $\mu\ge2$, it gives the exact obstruction $\vp(N)=a$ to an uncorrected higher congruence.

The sufficient degree condition in \cite[Theorem~4.5]{ABP}, in our notation, is $n>M\sum_i d_i$, and its conclusion is $p\mid N$. We replace it by $n>h\sum_i d_i$ and obtain $Q\mid N$. Since $M>h$ for $h>1$, this improves that condition. Norm-form examples show that the new threshold is best possible using only the degree table. This settles the degree-bound question in the first part of \cite[Problem~6.1]{ABP}; the separate request for a deduction from skew CNS is treated in Section~\ref{c:sec:deduction}.

The congruence, its sharpness, and the lower bound proved below are consequences in the skew setting. The geometric decomposition used to derive them is already in \cite[Lemmas~4.1--4.2]{ABP}. The classical divisibility theorem is due to Ax and Katz \cite{Ax,Katz}; the classical optimality for arbitrary degree tables is \cite[Proposition~4.0]{Katz}. We give a direct norm-form construction as well. Warning's lower bound and the estimates of Leep and Petrik are cited explicitly where used. The coefficient-filtration argument concerns optimization under linear combinations of scalar equations, rather than the exact valuation of every individual input.

The paper develops one counting method at two different levels of precision. At the first level, skew CNS leads to a sum that vanishes in a field of characteristic $p$. This proves divisibility of an integer count by $p$. At the second level, classical Ax--Katz controls the integer count itself and supplies powers of $Q$. These are related routes, but the second conclusion cannot be read off from a field-valued sum. Keeping this distinction explicit prevents a common mistake when $Q$ is not prime.

The common starting point is geometric. A compatible point has all its nonzero coordinate ratios in the fixed field. Consequently the compatible space is a collection of $K$-grids, all passing through the origin. A linear change of coefficients converts skew evaluation on any one of these grids into ordinary evaluation on $K^n$. Once this change is made, the zero-counting problem becomes a collection of ordinary finite-field problems. The correction term in the count is the exact bookkeeping required when these grids are put back together.

\subsection{Notation and organization}
The following list records the symbols used throughout the paper. The field size $Q$ and the extension degree $h$ play different roles: $Q$ controls ordinary finite-field counting, whereas $h$ measures the number of scalar coordinates needed to express an $F$-valued equation over $K$.
\begin{center}
\begin{tabular}{ll}
\toprule
Symbol & Meaning\\
\midrule
$p,a,Q$ & $p$ prime, $Q=p^a=|K|$\\
$h,q$ & $[F:K]=h$, $q=Q^h=|F|$\\
$M$ & Number of fixed-field slices, $(q-1)/(Q-1)$\\
$d_i,D$ & Prescribed degrees, and their maximum\\
$E$ & Scalar degree budget $h\sum_i d_i$\\
$Z_\omega$ & Ordinary zero count on a single slice\\
$N,\delta$ & Skew zero count, and the origin indicator\\
$\mathcal C$ & Corrected integer count $N+(M-1)\delta$\\
\bottomrule
\end{tabular}
\end{center}
Section~\ref{sec:slices} explains evaluation before any counting theorem is used. Sections~\ref{c:sec:interpolation} and \ref{c:sec:deduction} give the CNS route, including its multiplication obstruction. Section~\ref{sec:uniform} then introduces the stronger classical inputs and proves the integer congruence. Section~\ref{sec:sharpness} constructs examples and verifies every residue case needed for sharpness. The later sections replace the coarse degree budget by information from coefficients, transfer established nonlinear bounds, and work through concrete examples. The appendices supply a short alternative CNS argument and an elementary proof of the classical lower bound.

\section{Compatible points and fixed-field slices}\label{sec:slices}

For $b=(b_1,\ldots,b_n)\in F^n$, define
\begin{equation}\label{eq:compatible}
 \sigmaspace=\{b\in F^n:\sigma(b_i)b_j=\sigma(b_j)b_i
 \text{ for all }i,j\}.
\end{equation}
This is the $\sigma$-affine space of \cite{ABP}. For completeness, the evaluation on this space can be described using a left $R$-module. At a compatible point $b$, let $c\in F$ act on $F$ by multiplication and let
\[
 x_i\cdot t=\sigma(t)b_i\qquad(t\in F).
\]
Let $X_i(t)=\sigma(t)b_i$. For $c\in F$ we have
\[
 X_i(ct)=\sigma(c)\sigma(t)b_i=\sigma(c)X_i(t),
\]
which checks the coefficient relation. Composing two variable operators gives
\[
 X_iX_j(t)=\sigma^2(t)\sigma(b_j)b_i,\qquad
 X_jX_i(t)=\sigma^2(t)\sigma(b_i)b_j.
\]
Thus they commute exactly when the compatibility equations hold. These calculations explain why the point space is restricted before evaluation is defined. Define
\[
 f(b)=f\cdot1.
\]
This is the evaluation by right remainders used in \cite{ABP}. Indeed, the left ideal $\sum_i R(x_i-b_i)$ annihilates $1$; reduction by its generators shows that its quotient is spanned by $1$ over $F$, and the module just defined shows that no nonzero scalar lies in this ideal. In particular, the remainder is unique. Polynomial evaluation here is not asserted to be a ring homomorphism.

Choose a set $\Omega\subset F^\times$ of representatives of $F^\times/K^\times$, and define
\begin{equation}\label{eq:twisting}
 A_0(\omega)=1,\qquad
 A_d(\omega)=\prod_{j=0}^{d-1}\sigma^j(\omega)\quad(d\ge1).
\end{equation}
If $f=\sum_{d\ge0}f_d$ is its homogeneous decomposition, write
\begin{equation}\label{eq:transform}
 f^{[\omega]}(u)=\sum_{d\ge0}A_d(\omega)f_d(u)
 \in F[u_1,\ldots,u_n],
\end{equation}
where the variables on the right are ordinary commuting variables.

\begin{lemma}[Fixed-field slices; cf.\ \cite{ABP}]\label{lem:slices}
The sets $\omega K^n$, for $\omega\in\Omega$, cover $\sigmaspace$, and two distinct such sets intersect only at the origin. Moreover,
\begin{equation}\label{eq:evaluation}
 f(\omega u)=f^{[\omega]}(u)\qquad(u\in K^n).
\end{equation}
\end{lemma}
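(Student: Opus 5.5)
The proof has two independent parts: the set-theoretic decomposition of $\sigmaspace$, and the evaluation identity \eqref{eq:evaluation}. Both rest on the fact that $K=F^\sigma$.

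\textbf{Covering and intersections.} First I check that $\omega K^n\subset\sigmaspace$. If $b=\omega u$ with $u\in K^n$, then
\[
\sigma(b_i)b_j=\sigma(\omega)\omega u_iu_j=\sigma(b_j)b_i ,
\]
because $\sigma(u_i)=u_i$.

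Conversely, let $b\in\sigmaspace$ be nonzero, and pick $k$ with $b_k\ne0$. Compatibility gives $\sigma(b_i)b_k=\sigma(b_k)b_i$, that is,
\[
\sigma(b_i/b_k)=b_i/b_k .
\]
Hence $b_i/b_k\in K$ for all $i$, so $b\in b_kK^n$. Write $b_k=\omega c$ with $\omega\in\Omega$ and $c\in K^\times$. Then $b\in\omega K^n$. The origin lies in every slice, so the slices cover $\sigmaspace$.

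For distinct $\omega,\omega'\in\Omega$, suppose $\omega u=\omega' u'\ne0$. Choose a coordinate with $u_k\ne0$. Then $\omega/\omega'=u'_k/u_k\in K^\times$, which contradicts the choice of distinct coset representatives. So two distinct slices meet only at the origin.

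\textbf{Evaluation identity.} By left $F$-linearity of $f\mapsto f\cdot1$ and of $f\mapsto f^{[\omega]}(u)$, it suffices to treat a single monomial $x^\alpha=x_1^{\alpha_1}\cdots x_n^{\alpha_n}$ of degree $d$, with left coefficient $c$. The claim is that
\[
x^\alpha\cdot1=A_d(\omega)u^\alpha \quad\text{at } b=\omega u .
\]
I argue by induction on $d$, the case $d=0$ being trivial. Write $x^\alpha=x_ix^\beta$ with $|\beta|=d-1$; commutativity of the variables, hence of the operators $X_i$ on $\sigmaspace$, makes the order of factors irrelevant. By induction, $x^\beta\cdot1=A_{d-1}(\omega)u^\beta$. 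Then
\[
x_i\cdot\bigl(A_{d-1}(\omega)u^\beta\bigr)=\sigma\bigl(A_{d-1}(\omega)\bigr)\,u^\beta\,\omega u_i ,
\]
using $\sigma(u^\beta)=u^\beta$ and $b_i=\omega u_i$. Since $\sigma(A_{d-1}(\omega))=\prod_{j=1}^{d-1}\sigma^j(\omega)$, multiplying by $\omega$ gives exactly $A_d(\omega)$. Finally the coefficient acts by left multiplication, yielding $cA_d(\omega)u^\alpha$. Summing over monomials and grouping them by degree gives $f^{[\omega]}(u)$.

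The main point needing care is the order convention in the induction: which variable operator is applied last, so that $\sigma$ hits $A_{d-1}$ rather than $\omega$ being inserted on the wrong side. This is harmless because all values lie in the commutative field $F$ and the $u_i$ are $\sigma$-fixed. One still must apply $\sigma$ to the accumulated product before multiplying by $b_i$, exactly as in the definition $x_i\cdot t=\sigma(t)b_i$.
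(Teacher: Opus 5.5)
Your proof is correct and follows the paper's argument: the compatibility relation forces each ratio $b_i/b_k$ to be $\sigma$-fixed, which gives the covering and the trivial pairwise intersections. The evaluation identity then comes from tracking the successive factors $\omega,\sigma(\omega),\ldots,\sigma^{d-1}(\omega)$ in the module action, and your induction on degree (together with the explicit inclusion $\omega K^n\subseteq\Sigma_n$ and the choice $b_k=\omega c$) simply writes out the steps the paper states briefly.
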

\begin{proof}
If $0\ne b\in\sigmaspace$ and $b_j\ne0$, then
\[
 \sigma(b_i)b_j=\sigma(b_j)b_i
 \quad\Longrightarrow\quad
 \frac{\sigma(b_i)}{\sigma(b_j)}=\frac{b_i}{b_j}.
\]
The left side is $\sigma(b_i/b_j)$, so $b_i/b_j\in K$ for every $i$. Hence $b\in b_jK^n$. The converse follows by substitution in \eqref{eq:compatible}. If $\omega u=\eta v\ne0$ with $u,v\in K^n$, a nonzero coordinate gives $\omega/\eta\in K^\times$, proving the intersection assertion.

For $b=\omega u$, the module action on a monomial of total degree $d$ contributes the successive factors $\omega,\sigma(\omega),\ldots,\sigma^{d-1}(\omega)$. All coordinates of $u$ are fixed by $\sigma$. Therefore $x^\alpha(\omega u)=A_{|\alpha|}(\omega)u^\alpha$. Left $F$-linearity proves \eqref{eq:evaluation}.
\end{proof}

For each $\omega\in\Omega$, let
\begin{equation}\label{eq:slicecount}
 Z_\omega=\#\{u\in K^n:f_i^{[\omega]}(u)=0\text{ for all }i\}.
\end{equation}
The map $u\mapsto\omega u$ is a bijection onto its slice. Its origin is a common zero exactly when $\delta=1$. Thus Lemma~\ref{lem:slices} gives the integer identity
\begin{equation}\label{eq:countidentity}
 \sum_{\omega\in\Omega} Z_\omega=N+(M-1)\delta=\Ccount.
\end{equation}
Unlike its reduction modulo $p$, this identity retains the multiplicity of the common origin. In particular, $|\sigmaspace|=1+M(Q^n-1)$. The counts are independent of the choice of representatives: replacing $\omega$ by $\omega c$, with $c\in K^\times$, replaces $f^{[\omega]}(u)$ by $f^{[\omega]}(cu)$.

\subsection{The coefficient transform as a linear map}
It is useful to name the operation already appearing in \eqref{eq:transform}. Write
\[
 \T_\omega(f)=f^{[\omega]},
\]
using ordinary variables $t$ or $u$ as convenient. On a monomial of degree $d$, this map multiplies the coefficient by the nonzero scalar $A_d(\omega)$. It therefore has an inverse, obtained by dividing that coefficient by $A_d(\omega)$. Both maps preserve the support and every degree bound. They are $F$-linear maps of coefficient spaces.

The inverse map is particularly useful for a polynomial built by ordinary multiplication: first form the ordinary polynomial, and only then apply $\T_\omega^{-1}$. Its skew evaluation on $\omega K^n$ reproduces the desired ordinary function. This observation will let us build zero indicators without assuming a false multiplication rule for skew evaluation.

For example, a quadratic monomial evaluates as
\[
 (x_ix_j)(\omega u)=\sigma(\omega)\omega u_iu_j.
\]
Its ordinary product of coordinate values would instead be $\omega^2u_iu_j$. These expressions need not agree. The coefficient transform records precisely the extra factor required by the skew action.

\subsection{Why the origin changes integer divisibility}
Each map $u\mapsto\omega u$ is injective, so a single slice causes no overcounting. Overcounting appears only when different slices are combined. If the origin is not a zero, no correction is needed. If it is a zero, every slice includes that same point. The sum of $M$ slice counts includes the origin $M$ times, whereas $N$ includes it once; this explains $(M-1)\delta$ in \eqref{eq:countidentity}.

There are two useful arithmetic facts about the correction. Always $Q\mid M-1$, including $h=1$ when $M-1=0$. If $h>1$, then $M-1$ contains exactly one factor of $Q$ in the $p$-adic sense: $\nu_p(M-1)=a$. This second fact will explain why arbitrarily high divisibility of $\mathcal C$ can coexist with much smaller divisibility of $N$.
\subsection{Why skew powers do not form zero indicators}

\begin{example}\label{c:ex:wrong-indicator}
Let $F=\F_4=\F_2[\alpha]/(\alpha^2+\alpha+1)$, $\sigma(c)=c^2$, and $f=x+\alpha$. Computation in the skew ring gives
\[
 f^2=x^2+x+\alpha^2,
 \qquad f^3=x^3+\alpha^2x^2+1.
\]
At $b=1$, every skew monomial $x^d$ evaluates to $1$, so
\[
 f(1)=1+\alpha=\alpha^2\ne0,
 \qquad f^3(1)=\alpha^2\ne1=f(1)^3.
\]
Consequently $(1-f^3)(1)=1+\alpha^2=\alpha$, which is neither $0$ nor $1$. Thus $1-f^{q-1}$, formed with skew multiplication, is not generally a zero indicator. The ordinary-transform-and-pullback construction in Section~\ref{c:sec:deduction} resolves this precise obstruction.
\end{example}

\section{Skew CNS and coefficient extraction}
We now use the compatible grids just constructed. We first establish uniqueness under individual degree bounds, and then obtain the coefficient identity within the same section. This separation records where skew CNS enters and keeps the later zero-counting argument independent of the classical Chevalley--Warning theorem.

\begin{lemma}[Rank of a slice coordinate set]\label{c:lem:rank}
For every nonempty finite $B\subseteq K$, one has $\rk_\sigma(\omega B)=|B|$.
\end{lemma}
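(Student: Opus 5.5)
The plan is to reduce the statement to the familiar fact that a nonzero ordinary univariate polynomial of degree less than $|B|$ cannot vanish on $|B|$ points of a field. I use the definition of $\rk_\sigma$ from \cite{ABP}: for a finite set $A\subseteq F$, $\rk_\sigma(A)$ is the least degree of a nonzero skew polynomial $g\in F[x;\sigma]$ with $g(a)=0$ for every $a\in A$. This is the degree of the minimal polynomial of $A$, with evaluation by right remainders as in Section~\ref{sec:slices}. Since $\omega\neq0$, the set $\omega B$ has exactly $|B|$ elements.

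The key tool is Lemma~\ref{lem:slices} in the case $n=1$. There the compatibility condition \eqref{eq:compatible} is vacuous, so $\sigmaspace=F$. For every $g\in F[x;\sigma]$ and every $u\in K$, \eqref{eq:evaluation} gives
\[
 g(\omega u)=g^{[\omega]}(u),\qquad g^{[\omega]}=\T_\omega(g)\in F[u].
\]
As noted after \eqref{eq:countidentity}, $\T_\omega$ is an invertible $F$-linear map that rescales each coefficient by the nonzero scalar $A_d(\omega)$. Hence it preserves supports and exact degrees, and $g\ne0$ if and only if $\T_\omega(g)\ne0$.

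For the lower bound $\rk_\sigma(\omega B)\ge|B|$, let $g\ne0$ vanish on $\omega B$. Then the ordinary polynomial $g^{[\omega]}\in F[u]$ is nonzero, has the same degree as $g$, and vanishes at every $u\in B$. Since $B\subseteq K\subseteq F$ consists of $|B|$ distinct points of a field, this forces $\deg g\ge|B|$.

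For the upper bound $\rk_\sigma(\omega B)\le|B|$, I use the pullback construction described in the subsection on the coefficient transform. Form the ordinary polynomial $P(u)=\prod_{b\in B}(u-b)\in K[u]$, which has degree $|B|$, and set $g=\T_\omega^{-1}(P)\in F[x;\sigma]$. Then $g\ne0$, $\deg g=|B|$, and for each $b\in B$,
\[
 g(\omega b)=\T_\omega(g)(b)=P(b)=0.
\]
Alternatively, one can cite the general bound $\rk_\sigma(A)\le|A|$ from \cite{ABP}, obtained as a least common left multiple of the linear polynomials $x-a$. The explicit pullback, however, keeps the argument self-contained and avoids relying on skew multiplication, which Example~\ref{c:ex:wrong-indicator} shows behaves badly for evaluation.

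The main point needing care, rather than a true obstacle, is the justification of \eqref{eq:evaluation} for an arbitrary univariate skew polynomial at points $\omega u$ with $u\in K$. This includes $u=0$, where $A_0(\omega)=1$ and only the constant term survives. One must also check that the notion of rank in \cite{ABP} really is the minimal vanishing degree under this same right-remainder evaluation. Once both points are checked, the two inequalities combine to give $\rk_\sigma(\omega B)=|B|$.
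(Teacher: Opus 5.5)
Your proposal is correct and follows essentially the same route as the paper. The paper also obtains the upper bound from the pullback $\T_\omega^{-1}\bigl(\prod_{b\in B}(t-b)\bigr)$ and the lower bound by applying the univariate root bound to the transform of a hypothetical lower-degree annihilator, using \eqref{eq:evaluation}.
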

\begin{proof}
Let $m=|B|$. The inverse transform of $\prod_{b\in B}(t-b)$ is a skew polynomial of degree $m$ vanishing on $\omega B$. Conversely, if a nonzero skew polynomial of degree less than $m$ vanished there, its transform would be a nonzero ordinary polynomial of the same degree with $m$ distinct roots. This contradicts the univariate root bound. Thus the minimum degree of a nonzero annihilating skew polynomial is exactly $m$.
\end{proof}

For clarity, a set is $\sigma$-algebraic if some nonzero skew polynomial vanishes on it, and its $\sigma$-rank is the least possible degree of such a polynomial. The preceding lemma verifies both algebraicity and rank for all coordinate sets used below. We use the following existing theorem as our principal input.

\begin{theorem}[Skew CNS, ABP Theorem 3.1]\label{c:thm:input}
Suppose $P\in R$ has total degree $\sum_i k_i$ and nonzero coefficient of $x_1^{k_1}\cdots x_n^{k_n}$. Let $C_i$ be $\sigma$-algebraic sets with $\prod_i C_i\subseteq\Sigma_n$ and $\rk_\sigma(C_i)>k_i$. Then $P$ is nonzero at some point of that grid.
\end{theorem}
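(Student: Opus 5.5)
The plan is to reduce the statement to the classical Combinatorial Nullstellensatz on a single fixed-field slice. Lemma~\ref{lem:slices} converts skew evaluation there into ordinary evaluation, and Lemma~\ref{c:lem:rank} converts $\sigma$-rank into cardinality. The first step is to show that a product grid inside $\Sigma_n$ lies in one slice.

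\textbf{The case $n=1$.} Here $\Sigma_1=F$, so $C_1$ is an arbitrary $\sigma$-algebraic subset of $F$. The hypothesis on $P$ says that $P$ is nonzero of degree $k_1$. If $P$ vanished on $C_1$, it would be a nonzero annihilating skew polynomial of degree $k_1<\rk_\sigma(C_1)$. This contradicts the definition of $\sigma$-rank, so nothing more is needed.

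\textbf{The case $n\ge2$: the grid lies in one slice.} Suppose $c\in C_i$ and $c'\in C_j$ are nonzero with $i\ne j$. The grid is a product, so there is a grid point whose $i$-th coordinate is $c$, whose $j$-th coordinate is $c'$, and whose other coordinates are arbitrary. The proof of Lemma~\ref{lem:slices} shows that nonzero coordinate ratios of a compatible point lie in $K$, so $c/c'\in K^\times$.

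Now fix one nonzero element $c_0\in C_{i_0}$. Every nonzero element of every $C_j$ with $j\ne i_0$ lies in $c_0K$. Every nonzero element of $C_{i_0}$ itself is a $K^\times$-multiple of any nonzero element of some other $C_j$, if one exists. If no other $C_j$ has a nonzero element, then every $C_j$ with $j\ne i_0$ equals $\{0\}$. In that case each point $(0,\ldots,c,\ldots,0)$ is trivially compatible, and we must treat it separately. Either observe that it lies in the slice $cK^n$, or note directly that $\rk_\sigma(\{0\})=1$, so $k_j=0$ for $j\ne i_0$. Then restricting to that coordinate reduces to a univariate situation handled as in the case $n=1$. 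If no $C_i$ contains a nonzero element at all, every $k_i=0$ and $P$ is a nonzero constant, so the conclusion is immediate.

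In the main case we obtain $\omega\in F^\times$ and finite sets $B_i\subseteq K$ with $C_i=\omega B_i$ for all $i$; here $0=\omega\cdot0$ is allowed. This bookkeeping of degenerate configurations (sets equal to $\{0\}$, or all nonzero mass in one coordinate) is the step I expect to need the most care. It is routine but easy to get wrong.

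\textbf{Transfer to the ordinary CNS.} By Lemma~\ref{c:lem:rank}, $|B_i|=\rk_\sigma(C_i)>k_i$. Consider the ordinary polynomial $P^{[\omega]}=\T_\omega(P)$. The transform multiplies each degree-$d$ homogeneous part by the nonzero scalar $A_d(\omega)$. Hence $P^{[\omega]}$ has the same total degree $\sum_i k_i$, and its coefficient of $u_1^{k_1}\cdots u_n^{k_n}$ is $A_{\sum k_i}(\omega)$ times the original nonzero coefficient, hence nonzero. The classical Combinatorial Nullstellensatz applied on $B_1\times\cdots\times B_n\subseteq K^n$ gives a point $u$ with $P^{[\omega]}(u)\ne0$. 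By \eqref{eq:evaluation}, $P(\omega u)=P^{[\omega]}(u)\ne0$, and $\omega u\in\prod_i C_i$, which is the desired point of the grid.
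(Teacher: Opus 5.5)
The paper does not prove this statement. It quotes it from ABP as its principal input, so there is no in-paper argument to compare with. Your proof is correct in the paper's setting, and it runs in the opposite direction from the paper's use of the theorem.

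\textbf{Comparison of routes.} Corollary~\ref{c:cor:unique} applies skew CNS to get uniqueness on a slice grid. You use the same bridge, Lemma~\ref{lem:slices} and Lemma~\ref{c:lem:rank}, to derive skew CNS from the classical Nullstellensatz. This is not circular, because neither lemma uses Theorem~\ref{c:thm:input}. It does show that, in the paper's setting, the CNS route through Theorem~\ref{c:thm:coefficient} and Section~\ref{c:sec:deduction} is classical CNS transported through $\T_\omega$. This is consistent with the paper's remark that interpolation proves uniqueness directly. Your route makes the rank hypothesis transparent: off the coordinate axes, $\rk_\sigma(C_i)$ is just $|B_i|$. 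A proof carried out inside the skew ring, dividing by minimal annihilating polynomials as in Alon's argument, avoids your case analysis and does not depend on the slice description of $\Sigma_n$.

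\textbf{First repair: the axis case.} Suppose only $C_{i_0}$ has nonzero elements. Your first alternative (the points lie in slices $cK^n$) does not work. Here $C_{i_0}$ can be any subset of $F$ and may meet several cosets $\omega K^\times$. Then the grid is not contained in one slice, and $\rk_\sigma(C_{i_0})$ need not equal $|C_{i_0}|$. For instance, over $\F_4/\F_2$ the skew polynomial $x^2-1$ vanishes on all of $\F_4^\times$, since $x^2$ evaluates to $\sigma(b)b=b^3=1$ there. Hence $\rk_\sigma(\F_4^\times)=2<3$.

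Only your univariate reduction is valid, and it needs one line of justification. At $b=(0,\ldots,c,\ldots,0)$ the operator $X_j$ is zero for $j\ne i_0$, so every monomial containing such an $x_j$ evaluates to $0$. The pure $x_{i_0}$-part of $P$ is a nonzero element of $F[x;\sigma]$ of degree $k_{i_0}<\rk_\sigma(C_{i_0})$, and it is evaluated exactly as in the univariate ring.

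\textbf{Second repair: nonemptiness.} You tacitly use that every $C_i$ is nonempty. This holds because a nonzero constant annihilates $\emptyset$, so $\rk_\sigma(\emptyset)=0$, which the hypothesis $\rk_\sigma(C_i)>k_i\ge0$ excludes. The same fact makes every $B_i$ nonempty, as Lemma~\ref{c:lem:rank} requires.
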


\begin{corollary}[Uniqueness supplied by skew CNS]\label{c:cor:unique}
Let $B_i\subseteq K$ be nonempty finite sets, $k_i=|B_i|-1$. If a skew polynomial $P$ has $\deg_{x_i}P\le k_i$ for every $i$ and vanishes on $\omega\prod_i B_i$, then $P=0$.
\end{corollary}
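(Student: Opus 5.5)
We argue by contradiction using the skew CNS, Theorem~\ref{c:thm:input}, applied to a top-degree monomial of $P$. Suppose $P\ne0$. Among the monomials $x^\alpha$ with nonzero coefficient in $P$, choose one with $|\alpha|$ maximal; then $\deg P=|\alpha|=\sum_i\alpha_i$. The individual degree hypothesis gives $\alpha_i\le\deg_{x_i}P\le k_i$ for every $i$.

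Next we set up the grid. Put $C_i=\omega B_i\subseteq F$. By Lemma~\ref{c:lem:rank}, each $C_i$ is $\sigma$-algebraic with
\[
 \rk_\sigma(C_i)=|B_i|=k_i+1>k_i\ge\alpha_i .
\]
The product $\prod_i C_i$ equals $\omega\prod_i B_i\subseteq\omega K^n$, and this set lies in $\Sigma_n$ by Lemma~\ref{lem:slices}; directly, $\sigma(\omega u_i)\omega u_j=\sigma(\omega)\omega u_iu_j$ is symmetric in $i,j$ because $u_i,u_j\in K$ are fixed by $\sigma$.

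We now apply Theorem~\ref{c:thm:input} with exponents $k_i'=\alpha_i$. Its hypotheses hold: $P$ has total degree $\sum_i\alpha_i$, the coefficient of $x^\alpha$ is nonzero, and $\rk_\sigma(C_i)>\alpha_i$. The theorem then yields a point of $\omega\prod_iB_i$ at which $P$ is nonzero, contradicting the assumption that $P$ vanishes there. Hence $P=0$.

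The only delicate point is the bookkeeping in the choice of monomial. The CNS hypothesis requires the total degree to equal $\sum\alpha_i$, which is why we take a monomial of maximal total degree, not one of maximal individual degrees. The rank computation is already supplied by Lemma~\ref{c:lem:rank}.

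An alternative route avoids CNS entirely. Apply $\T_\omega$ and use \eqref{eq:evaluation}: the result is an ordinary polynomial with the same individual degree bounds vanishing on $\prod_iB_i$. The classical grid uniqueness then forces it to vanish, and since $\T_\omega$ is injective, $P=0$. We keep the CNS route to match the section's aim.
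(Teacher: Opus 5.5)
Your proof is correct and follows the paper's argument essentially verbatim: you choose a monomial of maximal total degree, bound its exponents by $k_i<|B_i|=\rk_\sigma(\omega B_i)$ via Lemma~\ref{c:lem:rank}, note that the grid lies in $\Sigma_n$, and invoke Theorem~\ref{c:thm:input}. Your alternative route through $\T_\omega$ and classical grid uniqueness is also valid; it is the direct interpolation argument the paper mentions in its remark after Corollary~\ref{c:cor:summation}.
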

\begin{proof}
If $P\ne0$, choose a monomial $x^\alpha$ of maximum total degree occurring in $P$. Then $\alpha_i\le k_i<|B_i|=\rk_\sigma(\omega B_i)$, while $\deg P=\sum_i\alpha_i$. The grid $\prod_i\omega B_i$ lies in $\Sigma_n$. Theorem~\ref{c:thm:input} contradicts vanishing on the grid.
\end{proof}

\subsection{The coefficient identity}\label{c:sec:interpolation}

This section is a transported version of classical finite-grid interpolation. The ordinary formula is not asserted to be new. Corollary~\ref{c:cor:unique} is the precise place where skew CNS enters the deduction.

\begin{theorem}[Coefficient formula on a fixed-field slice]\label{c:thm:coefficient}
Let $B_i\subseteq K$ be nonempty finite sets, put $k_i=|B_i|-1$ and $s=\sum_i k_i$, and set
\[
 \phi_i(t)=\prod_{b\in B_i}(t-b).
\]
For every $P\in R$ of total degree at most $s$,
\begin{equation}\label{c:eq:weighted}
 A_s(\omega)[x_1^{k_1}\cdots x_n^{k_n}]P
 =\sum_{u\in\prod_i B_i}
 \frac{P(\omega u)}{\prod_{i=1}^n\phi_i'(u_i)}.
\end{equation}
All denominators are nonzero.
\end{theorem}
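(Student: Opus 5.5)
Denominators first: $\phi_i'(u_i)=\prod_{b\in B_i,\,b\ne u_i}(u_i-b)$ is a product of nonzero elements of $K$, since the elements of $B_i$ are distinct.

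The plan is to reduce the identity to a statement about an ordinary polynomial on $\prod_i B_i$ via the coefficient transform $\T_\omega$ and the evaluation identity \eqref{eq:evaluation}, and then to use Corollary~\ref{c:cor:unique} to justify a reduction step. Put $g=\T_\omega(P)=P^{[\omega]}\in F[u_1,\ldots,u_n]$. By Lemma~\ref{lem:slices}, $P(\omega u)=g(u)$ for $u\in K^n$. Since $\T_\omega$ multiplies each degree-$s$ coefficient by $A_s(\omega)$, we have $[u^k]g=A_s(\omega)[x^k]P$, where $k=(k_1,\ldots,k_n)$. So \eqref{c:eq:weighted} is equivalent to the ordinary statement
\[
 [u_1^{k_1}\cdots u_n^{k_n}]\,g=\sum_{u\in\prod_i B_i}\frac{g(u)}{\prod_i\phi_i'(u_i)}
\]
for ordinary $g$ of total degree at most $s$.

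For the ordinary statement, I reduce $g$ modulo the polynomials $\phi_i(u_i)$. Replacing $u_i^{|B_i|}$ repeatedly by $u_i^{|B_i|}-\phi_i(u_i)$ produces $\bar g$ with $\deg_{u_i}\bar g\le k_i$ and $\bar g=g$ on $\prod_i B_i$. Each replacement lowers the degree strictly, so no monomial of total degree at most $s$ can be reduced into $u^k$ unless it is not reduced at all. Indeed, a monomial that undergoes a reduction has some exponent exceeding $k_i$ and total degree at most $s$. Every term it produces either has degree below $s$, or has the same shape with the exponent lowered while the total degree also drops. Hence $[u^k]\bar g=[u^k]g$. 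This is the standard CNS bookkeeping.

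Next I identify $\bar g$ with Lagrange interpolation on the grid:
\[
 L(u)=\sum_{v\in\prod B_i} g(v)\prod_i\frac{\phi_i(u_i)}{(u_i-v_i)\phi_i'(v_i)}.
\]
Both $L$ and $\bar g$ have $\deg_{u_i}\le k_i$ and agree with $g$ on the grid. To show they coincide, I would use Corollary~\ref{c:cor:unique}, which is where skew CNS must enter. The difference $\bar g-L$ is an ordinary polynomial with individual degrees at most $k_i$ vanishing on $\prod B_i$. Its pullback $\T_\omega^{-1}(\bar g-L)$ has the same individual degree bounds and, by \eqref{eq:evaluation}, vanishes on $\omega\prod_i B_i$. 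The corollary then forces it to be zero, and since $\T_\omega$ is invertible, $\bar g=L$.

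Finally, I read off $[u^k]L$: each summand contributes $g(v)/\prod_i\phi_i'(v_i)$, the product of leading coefficients. Combining the steps gives the formula. I expect the main obstacle to be the reduction step, namely verifying that reduction preserves the top coefficient $[u^k]$ under only a total-degree bound. This is routine but needs care. The transfer between skew and ordinary polynomials is mechanical once $\T_\omega$ is used.
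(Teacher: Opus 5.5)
Your proposal is correct and follows essentially the same route as the paper: apply $\T_\omega$, reduce modulo the $\phi_i(t_i)$ (the strict degree drop preserves the coefficient of $t^k$), compare with the grid Lagrange interpolant, and use Corollary~\ref{c:cor:unique} on the pulled-back difference to identify the two before reading off the top coefficient. The only cosmetic difference is that you subtract in the ordinary ring and pull back the difference, whereas the paper pulls back $\overline H$ and $\sum_u P(\omega u)\ell_u$ separately; since $\T_\omega^{-1}$ is linear, these are the same step.
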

\begin{proof}
Write $H=\T_\omega(P)$. In the ordinary commutative ring, reduce $H$ successively modulo the monic polynomials $\phi_i(t_i)$. The result $\overline H$ has individual degrees at most $k_i$, has the same grid values as $H$, and has no larger total degree. Every replaced monomial strictly decreases in total degree. Since $\deg H\le s$, such replacements cannot create a monomial of total degree $s$. Consequently
\begin{equation}\label{c:eq:coefficient-preserved}
 [t_1^{k_1}\cdots t_n^{k_n}]\overline H
 =[t_1^{k_1}\cdots t_n^{k_n}]H
 =A_s(\omega)[x_1^{k_1}\cdots x_n^{k_n}]P.
\end{equation}
For $u\in\prod_i B_i$, define the ordinary Lagrange polynomial
\[
 \ell_u(t)=\prod_{i=1}^n
 \frac{\prod_{b\in B_i\setminus\{u_i\}}(t_i-b)}
 {\prod_{b\in B_i\setminus\{u_i\}}(u_i-b)}.
\]
It is $1$ at $u$ and $0$ at every other grid point, and its coefficient of $t^k$ is $\prod_i\phi_i'(u_i)^{-1}$. Put $L_u=\T_\omega^{-1}(\ell_u)$ and $\overline P=\T_\omega^{-1}(\overline H)$. By \eqref{eq:evaluation}, both
\[
 \overline P\quad\hbox{and}\quad
 \sum_{u\in\prod_iB_i}P(\omega u)L_u
\]
have the same values on the slice grid and individual degrees at most $k_i$. Subtract these two skew polynomials. The difference vanishes at every point of the slice grid and still has individual degrees at most $k_i$. If the difference were nonzero, its top-degree monomial would meet every rank requirement of skew CNS. Corollary~\ref{c:cor:unique} therefore makes the difference zero. Taking the coefficient of $x^k$, then using \eqref{c:eq:coefficient-preserved}, proves \eqref{c:eq:weighted}.
\end{proof}

\begin{corollary}[Low-degree summation from skew CNS]\label{c:cor:summation}
Put $t_0=n(Q-1)$. If $\deg P\le t_0$, then in $F$,
\begin{equation}\label{c:eq:sum}
 \sum_{u\in K^n}P(\omega u)
 =(-1)^n A_{t_0}(\omega)
 [x_1^{Q-1}\cdots x_n^{Q-1}]P.
\end{equation}
In particular, the sum is zero if $\deg P<t_0$.
\end{corollary}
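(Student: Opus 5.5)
The plan is to specialize Theorem~\ref{c:thm:coefficient} to the full slice grid: take $B_i=K$ for every $i$. Then $k_i=Q-1$ and $s=\sum_i k_i=n(Q-1)=t_0$. Moreover, $\phi_i(t)=\prod_{b\in K}(t-b)=t^Q-t$ for each $i$, so the hypothesis $\deg P\le t_0$ is exactly the hypothesis of that theorem. Formula \eqref{c:eq:weighted} then reads
\[
 A_{t_0}(\omega)[x_1^{Q-1}\cdots x_n^{Q-1}]P
 =\sum_{u\in K^n}\frac{P(\omega u)}{\prod_{i=1}^n\phi_i'(u_i)}.
\]

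The key computation is the derivative. We have $\phi_i'(t)=Qt^{Q-1}-1=-1$ in characteristic $p$, since $p\mid Q$. Hence every denominator equals $(-1)^n$, which is independent of $u$. Multiplying both sides by $(-1)^n$ gives \eqref{c:eq:sum}; note that $(-1)^{-n}=(-1)^n$.

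For the final assertion, suppose $\deg P<t_0$. Then the monomial $x_1^{Q-1}\cdots x_n^{Q-1}$, which has total degree $t_0$, cannot occur in $P$. Its coefficient is therefore zero, and the sum vanishes. This uses only that coefficients are read in the left-coefficient convention, as fixed earlier in the paper.

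There is no real obstacle. The only points to check are that the equality holds in $F$, and that $\phi_i'=-1$ requires the characteristic to divide $Q$. We should also note that $A_{t_0}(\omega)$ is nonzero, although this is not needed for the identity itself.
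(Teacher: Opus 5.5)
Your proposal is correct and follows the paper's proof: specialize Theorem~\ref{c:thm:coefficient} to $B_i=K$, so that $\phi_i(t)=t^Q-t$, $\phi_i'=-1$, and every denominator equals $(-1)^n$. The final assertion then follows because the coefficient of the degree-$t_0$ monomial vanishes when $\deg P<t_0$.
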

\begin{proof}
Take $B_i=K$ in Theorem~\ref{c:thm:coefficient}. Then $\phi_i(t)=t^Q-t$ and $\phi_i'(t)=-1$. This makes the denominator in \eqref{c:eq:weighted} equal to $(-1)^n$. A polynomial of degree less than $t_0$ has zero coefficient of the displayed degree-$t_0$ monomial.
\end{proof}

\begin{remark}
The argument derives summation from skew CNS rather than assuming an Ax summation lemma. Interpolation can also prove uniqueness directly; using skew CNS as above records the exact implication requested in Problem 6.1. The formula applies to the actual automorphism $\sigma$ and to nontrivial grids $\omega K^n$, not merely to the identity-automorphism case.
\end{remark}

\section{Chevalley--Warning as a consequence of skew CNS}\label{c:sec:deduction}

Let $f_1,\ldots,f_r\in R$ have positive degrees $d_i$, and let $N$ be their common zero count on $\Sigma_n$. Let $\delta$ be $1$ if all $f_i(0)=0$, and $0$ otherwise. Zero equations may be deleted; a nonzero constant equation makes the zero set empty.

\begin{theorem}[ABP's skew Chevalley--Warning theorem, deduced from skew CNS]\label{c:thm:ABP-CW}
If
\begin{equation}\label{c:eq:ABP-condition}
 (q-1)\sum_{i=1}^r d_i<n(Q-1),
 \quad\text{equivalently}\quad n>M\sum_{i=1}^r d_i,
\end{equation}
then $p\mid N$.
\end{theorem}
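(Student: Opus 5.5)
We prove the statement slice by slice, using the integer identity \eqref{eq:countidentity} modulo $p$, and treat each slice count $Z_\omega$ with the low-degree summation of Corollary~\ref{c:cor:summation}. Since $Q\mid M-1$, we have $N\equiv \Ccount=\sum_{\omega\in\Omega}Z_\omega \pmod p$. It therefore suffices to show $p\mid Z_\omega$ for every $\omega\in\Omega$. By Lemma~\ref{lem:slices}, $Z_\omega$ counts the $u\in K^n$ at which every ordinary polynomial $g_i=f_i^{[\omega]}\in F[u]$ vanishes, and $\deg g_i=d_i$.

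The key step is to build a skew polynomial whose skew values on $\omega K^n$ form the zero indicator. As Example~\ref{c:ex:wrong-indicator} shows, skew powers cannot be used for this. Instead, first form the ordinary product
\[
 G(u)=\prod_{i=1}^r\bigl(1-g_i(u)^{q-1}\bigr)\in F[u_1,\ldots,u_n].
\]
Its value is $1$ at common zeros and $0$ elsewhere, since $c^{q-1}=1$ for $c\in F^\times$, and its degree is at most $(q-1)\sum_i d_i$. Then pull it back: set $P=\T_\omega^{-1}(G)$. The inverse transform preserves degree bounds, so $\deg P\le (q-1)\sum_i d_i<n(Q-1)=t_0$ by \eqref{c:eq:ABP-condition}. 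By \eqref{eq:evaluation}, $P(\omega u)=G(u)$ for all $u\in K^n$.

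Now Corollary~\ref{c:cor:summation} gives $\sum_{u\in K^n}P(\omega u)=0$ in $F$. The left side equals $Z_\omega\cdot 1_F$, so $p\mid Z_\omega$. Summing over $\omega$ and using the congruence above gives $p\mid N$.

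We also check the stated equivalence. Dividing $(q-1)\sum_i d_i<n(Q-1)$ by $Q-1$ gives $M\sum_i d_i<n$, since $(q-1)/(Q-1)=M$.

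We expect the main obstacle to be the indicator construction. One must apply $\T_\omega^{-1}$ only after ordinary multiplication, and then confirm that the degree bound survives this step. This is handled by the support-preserving property of $\T_\omega$ noted in Section~\ref{sec:slices}. The remaining point needing care is the origin: the correction $(M-1)\delta$ is divisible by $Q$, so it does not affect the congruence modulo $p$.
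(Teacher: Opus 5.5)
Your proof is correct and follows the paper's argument essentially step for step. Like the paper, you form the ordinary zero indicator $\prod_i(1-g_i^{q-1})$ on each slice and pull it back by $\T_\omega^{-1}$. You then apply Corollary~\ref{c:cor:summation} to get $p\mid Z_\omega$, and finish with the slice-count identity together with $p\mid M-1$.
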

\begin{proof}
Fix $\omega\in\Omega$, and in the ordinary commutative polynomial ring define
\[
 H_\omega(t)=\prod_{i=1}^r
 \bigl(1-\T_\omega(f_i)(t)^{q-1}\bigr).
\]
At $u\in K^n$, a factor is $1$ precisely when $f_i(\omega u)=0$, and is $0$ otherwise, since every nonzero element of $F$ has $(q-1)$st power $1$. Thus $H_\omega$ is the zero indicator on this grid.

Set $P_\omega=\T_\omega^{-1}(H_\omega)\in R$. It satisfies
\[
 P_\omega(\omega u)=H_\omega(u)\in\{0,1\},
 \qquad
 \deg P_\omega\le(q-1)\sum_i d_i<n(Q-1).
\]
Corollary~\ref{c:cor:summation}, derived from skew CNS, gives
\[
 0=\sum_{u\in K^n}P_\omega(\omega u)=Z_\omega\,1_F,
\]
where $Z_\omega$ is the integer zero count on that slice. The map $\mathbb Z\to F$, $m\mapsto m1_F$, has kernel $p\mathbb Z$. Thus the equality of a sum in $F$ with zero means precisely that its integer count is divisible by $p$. Therefore $p\mid Z_\omega$.

The slice intersection description gives the integer identity
\begin{equation}\label{c:eq:integer-count}
 \sum_{\omega\in\Omega}Z_\omega=N+(M-1)\delta.
\end{equation}
Finally $M=1+Q+\cdots+Q^{h-1}\equiv1\pmod p$, so $p\mid(M-1)$. Reducing \eqref{c:eq:integer-count} modulo $p$ proves $p\mid N$.
\end{proof}

\begin{remark}[No multiplicativity assumption]
All powers and products defining $H_\omega$ are ordinary commutative ones. The inverse transform is applied to the resulting polynomial only after these operations. This is legitimate because $\T_\omega$ is a degree-preserving linear bijection. No equality between a skew product and a pointwise product is used.
\end{remark}

\subsection{Restriction of scalars gives a better degree condition}

The following is an elementary restriction-of-scalars consequence. Its scalar algebra and norm-form sharpness are classical ingredients; they are included to describe what the same CNS deduction yields beyond the original ABP threshold.

\begin{theorem}[Strengthened $p$-divisibility from skew CNS]\label{c:thm:strong}
Under the same hypotheses, if
\begin{equation}\label{c:eq:strong-condition}
 n>h\sum_{i=1}^r d_i,
\end{equation}
then $p\mid N$. The implication follows from skew CNS without invoking Chevalley--Warning or Ax--Katz.
\end{theorem}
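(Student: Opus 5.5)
Fix $\omega\in\Omega$ and work slice by slice, as in the proof of Theorem~\ref{c:thm:ABP-CW}. The improvement comes from restriction of scalars. Instead of raising each $F$-valued equation to the power $q-1$, we split it into $h$ scalar equations over $K$. Choose a $K$-basis $e_1,\ldots,e_h$ of $F$ and let $\tau_1,\ldots,\tau_h\colon F\to K$ be the dual coordinate functionals. Each $\tau_j$ can be written as $\tau_j(c)=\sum_{k=0}^{h-1}\lambda_{jk}\sigma^k(c)$ with $\lambda_{jk}\in F$; this is the trace-dual description, valid for any generator $\sigma$. For $u\in K^n$, $f_i(\omega u)=0$ holds iff $\tau_j(\T_\omega(f_i)(u))=0$ for all $j$. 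Define the ordinary polynomial
\[
 g_{ij}(t)=\sum_{k=0}^{h-1}\lambda_{jk}\,\bigl(\T_\omega(f_i)\bigr)^{\sigma^k}(t)\in F[t_1,\ldots,t_n],
\]
where $\sigma^k$ is applied to coefficients. Then $g_{ij}(u)=\tau_j(\T_\omega(f_i)(u))\in K$ for $u\in K^n$, since $u$ is fixed by $\sigma$. Moreover $\deg g_{ij}\le d_i$.

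Next form the ordinary zero indicator
\[
 H_\omega(t)=\prod_{i=1}^r\prod_{j=1}^h\bigl(1-g_{ij}(t)^{Q-1}\bigr).
\]
On $K^n$ each factor equals $1$ exactly when $g_{ij}(u)=0$, and $0$ otherwise, because $g_{ij}(u)\in K$ and $c^{Q-1}=1$ for $c\in K^\times$. So $H_\omega(u)$ is the indicator of common zeros on the slice, and its degree is at most $(Q-1)h\sum_i d_i$. Under \eqref{c:eq:strong-condition} this degree is less than $n(Q-1)=t_0$. Put $P_\omega=\T_\omega^{-1}(H_\omega)$. By \eqref{eq:evaluation}, $P_\omega(\omega u)=H_\omega(u)$, and $\deg P_\omega=\deg H_\omega<t_0$. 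Corollary~\ref{c:cor:summation} then gives $Z_\omega 1_F=\sum_{u\in K^n}P_\omega(\omega u)=0$, so $p\mid Z_\omega$. Summing over $\omega$ with \eqref{c:eq:integer-count} and using $p\mid M-1$ yields $p\mid N$, exactly as before. No Chevalley--Warning or Ax--Katz input is used, only skew CNS through Corollary~\ref{c:cor:summation}.

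The main obstacle is to justify the scalar-coordinate step cleanly for an arbitrary generator $\sigma$, not just Frobenius, while keeping the degree bound $\le d_i$. I would first try the $\sigma$-conjugate coefficient polynomials as above. Since $\{1,\sigma,\ldots,\sigma^{h-1}\}$ is the full Galois group, any $K$-linear functional is an $F$-combination of the $\sigma^k$ by Dedekind independence (or via the trace form). Applying $\sigma^k$ to coefficients does not raise the degree, and it commutes with evaluation at $K$-points. If this representation felt delicate, an alternative is to note that $\operatorname{Tr}_{F/K}(e_j^\ast c)$ gives the coordinates for the dual basis $e_j^\ast$, and the trace is visibly a sum of the $\sigma^k$.
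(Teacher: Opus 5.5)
Your proof is correct and follows the paper's argument. You split each $\T_\omega(f_i)$ into $h$ scalar coordinates of degree at most $d_i$, form the ordinary indicator $\prod_{i,j}(1-g_{ij}^{Q-1})$ of degree at most $(Q-1)h\sum_i d_i<n(Q-1)$, pull it back by $\T_\omega^{-1}$, and conclude with Corollary~\ref{c:cor:summation} and the slice-count identity. The only difference is cosmetic: your $g_{ij}$ has coefficient $\tau_j(c_\alpha)$ at $t^\alpha$, so it is exactly the coefficientwise $K$-coordinate polynomial the paper writes down directly, and the $\sigma$-conjugate/trace-dual representation you flag as the ``main obstacle'' is unnecessary, since a $K$-linear functional applied coefficientwise already commutes with evaluation at points of $K^n$.
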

\begin{proof}
Choose a $K$-basis $\beta_1,\ldots,\beta_h$ of $F$ and write coefficientwise
\[
 \T_\omega(f_i)=\sum_{j=1}^h\beta_jg_{ij,\omega},
 \qquad g_{ij,\omega}\in K[t_1,\ldots,t_n],
 \qquad\deg g_{ij,\omega}\le d_i.
\]
At $K$-points, the left side vanishes precisely when all its scalar coordinates vanish. The ordinary polynomial
\[
 J_\omega(t)=\prod_{i=1}^r\prod_{j=1}^h
 \bigl(1-g_{ij,\omega}(t)^{Q-1}\bigr)
\]
is therefore another zero indicator. Its degree is at most
\[
 (Q-1)h\sum_i d_i<n(Q-1).
\]
Apply Corollary~\ref{c:cor:summation} to $\T_\omega^{-1}(J_\omega)$. It gives $p\mid Z_\omega$. Finish with \eqref{c:eq:integer-count}, as in Theorem~\ref{c:thm:ABP-CW}.
\end{proof}

The threshold in this theorem will be proved optimal by the homogeneous norm constructions in Section~\ref{sec:sharpness}. We postpone that construction so that the counting proofs can be read without an interruption for norm forms.

\begin{remark}[What the field calculation proves]
The conclusion $Z_\omega1_F=0$ proves divisibility by the characteristic $p$. If $Q=p^a$ with $a>1$, it does not prove $Q\mid Z_\omega$. Higher integer divisibility requires an additional argument; no such conclusion is inferred from the characteristic-$p$ summation formula here.
\end{remark}

\begin{corollary}[An input-dependent sufficient condition]\label{c:cor:local}
For each $\omega$, take the scalar coordinate forms above and canonically reduce them modulo $(t_1^Q-t_1,\ldots,t_n^Q-t_n)$. If a coordinate polynomial is a nonzero constant, that slice is empty. Otherwise delete zero coordinates and let $B_\omega$ be the sum of the positive degrees of the remaining coordinates. If every nonempty slice has $B_\omega<n$, then $p\mid N$.
\end{corollary}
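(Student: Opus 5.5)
The plan is to run the proof of Theorem~\ref{c:thm:strong} slice by slice, replacing the uniform degree bound $d_i$ on each scalar coordinate by the reduced degree that actually occurs. Fix $\omega\in\Omega$ and write $\T_\omega(f_i)=\sum_j\beta_j g_{ij,\omega}$ as before. Let $\overline g_{ij,\omega}$ be the canonical reduction of $g_{ij,\omega}$ modulo $(t_1^Q-t_1,\ldots,t_n^Q-t_n)$. Each $\overline g_{ij,\omega}$ has individual degrees at most $Q-1$ and takes the same values as $g_{ij,\omega}$ on $K^n$. So the zero set of slice $\omega$ is the common zero set in $K^n$ of the reduced coordinates.

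The argument then splits into three cases.

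\emph{A nonzero constant coordinate.} If some $\overline g_{ij,\omega}$ is a nonzero constant, that coordinate never vanishes on $K^n$. Then $Z_\omega=0$, which is divisible by $p$.

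\emph{Zero coordinates.} A coordinate that reduces to $0$ vanishes identically on $K^n$. It imposes no condition, so its factor $1-0^{Q-1}=1$ can be dropped from the indicator.

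\emph{The remaining case.} Every surviving coordinate has positive reduced degree, and the degrees sum to $B_\omega$. Form the ordinary indicator
\[
 J_\omega(t)=\prod_{(i,j)\ \mathrm{surviving}}\bigl(1-\overline g_{ij,\omega}(t)^{Q-1}\bigr),
\]
of degree at most $(Q-1)B_\omega<(Q-1)n$ by hypothesis. Its values on $K^n$ are the indicator of the slice zero set. Apply Corollary~\ref{c:cor:summation} to $\T_\omega^{-1}(J_\omega)$. This uses Lemma~\ref{lem:slices} to identify the skew values on $\omega K^n$ with $J_\omega$, and the fact that $\T_\omega^{-1}$ preserves degree. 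The corollary gives $Z_\omega 1_F=0$, hence $p\mid Z_\omega$.

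Finally, sum over $\omega$ using the integer identity \eqref{c:eq:integer-count}. Every $Z_\omega$ is divisible by $p$, whether the slice is empty or nonempty, and $p\mid M-1$. Reducing modulo $p$ then gives $p\mid N$.

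The main point needing care is the role of emptiness. The hypothesis $B_\omega<n$ is imposed only on nonempty slices, so an empty slice with no constant coordinate and $B_\omega\ge n$ must still be handled. I would observe that such a slice has $Z_\omega=0$, so divisibility is trivial there. When the origin is a common zero, no slice is empty, and every slice falls under the hypothesis.

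One should also check that the reduction is compatible with the indicator. The reduction preserves values on $K^n$. The $(Q-1)$st power and the product are taken in the ordinary commutative ring before pulling back, exactly as in the remark following Theorem~\ref{c:thm:ABP-CW}. So no skew multiplicativity is used.
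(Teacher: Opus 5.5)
Your proposal is correct and follows essentially the same route as the paper: reduce the scalar coordinates, form the ordinary indicator of degree at most $(Q-1)B_\omega<n(Q-1)$, pull it back by $\T_\omega^{-1}$, apply Corollary~\ref{c:cor:summation}, and sum using \eqref{c:eq:integer-count}. Your empty product over surviving coordinates covers the all-zero case, which the paper treats separately with the constant indicator $1$. Your remark that a genuinely empty slice contributes $Z_\omega=0$ regardless of $B_\omega$ is a point of care the paper leaves implicit.
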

\begin{proof}
Reduction preserves values. Repeat the indicator construction using the reduced coordinate forms. Its degree is $(Q-1)B_\omega<n(Q-1)$. If all coordinates are zero, use the constant indicator $1$, whose degree is $0<n(Q-1)$. Apply Corollary~\ref{c:cor:summation} and then \eqref{c:eq:integer-count}.
\end{proof}

\section{The uniform congruence and the zero-count bound}\label{sec:uniform}

The preceding argument reaches a congruence modulo $p$. We now keep the same slice geometry and coordinate expansion, but apply an integer divisibility theorem to each scalar system. This is the additional input responsible for the higher powers of $Q$; it does not follow just by repeating a field-valued summation.

We use the following classical inputs. For a nonempty list of nonconstant polynomials $g_1,\ldots,g_s\in K[u_1,\ldots,u_n]$ of degrees $e_j$, put $B=\sum_j e_j$ and $d=\max_j e_j$. The Ax--Katz theorem \cite[Theorem~1.0]{Katz} gives
\begin{equation}\label{eq:classicalaxkatz}
 Q^{\max\{0,\lceil(n-B)/d\rceil\}}\mid
 \#\{u\in K^n:g_1(u)=\cdots=g_s(u)=0\}.
\end{equation}
Warning's second theorem \cite{Warning} states that a nonempty such zero set has at least $Q^{n-B}$ points when $B\le n$. It is also recorded in \cite[Theorem~1.1(3)]{LP23}; an elementary proof is included in Appendix~\ref{app:warning}. An empty list of equations has $Q^n$ zeros, and a nonzero constant equation has none.

\begin{theorem}[Corrected Ax--Katz congruence]\label{thm:congruence}
With the notation in \eqref{eq:fields}--\eqref{eq:parameters},
\begin{equation}\label{eq:maincongruence}
 Q^\mu\mid N+(M-1)\delta.
\end{equation}
\end{theorem}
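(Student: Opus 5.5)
We reduce the congruence to a slice-by-slice statement: it suffices to show $Q^\mu\mid Z_\omega$ for every $\omega\in\Omega$. Summing over $\omega$ and applying the integer identity \eqref{eq:countidentity}, $\sum_\omega Z_\omega=N+(M-1)\delta$, then gives \eqref{eq:maincongruence}. No field-valued summation is involved at this stage. We work with integer counts throughout, since the characteristic-$p$ argument of Section~\ref{c:sec:deduction} cannot reach powers of $Q$.

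Fix $\omega$, choose a $K$-basis $\beta_1,\ldots,\beta_h$ of $F$, and expand coefficientwise, exactly as in the proof of Theorem~\ref{c:thm:strong}:
\[
 \T_\omega(f_i)=\sum_{j=1}^h\beta_j g_{ij,\omega},\qquad g_{ij,\omega}\in K[u_1,\ldots,u_n],\qquad \deg g_{ij,\omega}\le d_i.
\]
By Lemma~\ref{lem:slices}, $Z_\omega$ is the number of common zeros in $K^n$ of the scalar system $\{g_{ij,\omega}\}$. This is an ordinary system of at most $rh$ polynomials over $K$, so we apply the classical Ax--Katz theorem \eqref{eq:classicalaxkatz} to it.

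The main obstacle is bookkeeping for degenerate coordinates, because \eqref{eq:classicalaxkatz} is stated for nonempty lists of nonconstant polynomials. We clean the list in three steps.
\begin{enumerate}
\item If some $g_{ij,\omega}$ is a nonzero constant, then $Z_\omega=0$, which is divisible by everything.
\item Delete the coordinates that are identically zero.
\item If no coordinates remain, then $Z_\omega=Q^n$. Since $d_i\ge1$ gives $E\ge h\ge1$, we have $\mu\le\lceil (n-1)/D\rceil\le n$, so $Q^\mu\mid Q^n$.
\end{enumerate}
Otherwise the surviving list is nonempty and nonconstant, with degrees $e_{ij}\le d_i$. Its degree sum satisfies $B\le h\sum_i d_i=E$, and its maximum degree satisfies $d\le D$.

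It remains to compare exponents. If $n\le E$, then $\mu=0$ and there is nothing to prove. If $n>E$, then $n-B\ge n-E>0$ and $d\le D$, so
\[
 \left\lceil\frac{n-B}{d}\right\rceil\ge\left\lceil\frac{n-E}{D}\right\rceil=\mu .
\]
Hence $Q^\mu$ divides the Ax--Katz power of $Q$, and therefore divides $Z_\omega$. Summing over the slices completes the argument.
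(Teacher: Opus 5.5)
Your proof is correct and follows the paper's argument essentially step for step. Like the paper, you expand each $f_i^{[\omega]}$ in a $K$-basis of $F$ and dispose of the degenerate slices: a nonzero constant coordinate gives $Z_\omega=0$, and an all-zero list gives $Z_\omega=Q^n$ with $\mu\le n$. You then check $\lceil (n-B_\omega)/d_\omega\rceil\ge\mu$, apply classical Ax--Katz on each slice, and sum using \eqref{eq:countidentity}. One small attribution point: vanishing of $f_i^{[\omega]}(u)$ is equivalent to vanishing of all $g_{ij,\omega}(u)$ because the $\beta_j$ are $K$-linearly independent and the values $g_{ij,\omega}(u)$ lie in $K$; this does not come from Lemma~\ref{lem:slices} itself.
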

\begin{proof}
Fix a $K$-basis $\beta_1,\ldots,\beta_h$ of $F$. For every $\omega\in\Omega$, write uniquely
\begin{equation}\label{eq:coordinates}
 f_i^{[\omega]}=\sum_{j=1}^h\beta_j g_{ij,\omega},
 \qquad g_{ij,\omega}\in K[u_1,\ldots,u_n].
\end{equation}
The expansion is performed separately on every coefficient. In particular, for each monomial $u^\alpha$, its coefficient in $F$ is uniquely a sum $\sum_j\beta_jc_{ij,\alpha}$ with $c_{ij,\alpha}\in K$. Collecting these coefficients defines the polynomials $g_{ij,\omega}$ without changing any exponents. At $u\in K^n$, all values $g_{ij,\omega}(u)$ lie in $K$. Independence of the $\beta_j$ therefore makes vanishing of $f_i^{[\omega]}(u)$ equivalent to simultaneous vanishing of its $h$ scalar coordinates. Each coordinate has degree at most $d_i$. Delete zero coordinates. If a remaining coordinate is a nonzero constant, then $Z_\omega=0$. If no coordinates remain, then $Z_\omega=Q^n$, which is divisible by $Q^\mu$ since $\mu\le n$. Otherwise the sum $B_\omega$ of their degrees satisfies $B_\omega\le E$, and their maximum $d_\omega$ satisfies $d_\omega\le D$.

If $\mu=0$ there is nothing to prove. If $\mu>0$, then $n-E>0$, so
\[
 \frac{n-B_\omega}{d_\omega}\ge\frac{n-E}{D}.
\]
To justify the comparison, first use $B_\omega\le E$ to get $n-B_\omega\ge n-E>0$. Dividing by $d_\omega\le D$ can only increase this positive quotient. Taking ceilings preserves the inequality. Equation~\eqref{eq:classicalaxkatz} therefore gives $Q^\mu\mid Z_\omega$. Summing over the slices and using \eqref{eq:countidentity} proves the assertion.
\end{proof}

\begin{corollary}[Optimal degree threshold]\label{cor:threshold}
If $n>h\sum_i d_i$, then $Q\mid N$. For every positive degree table, at $n=h\sum_i d_i$ there is a homogeneous system of those exact degrees with $N=1$.
\end{corollary}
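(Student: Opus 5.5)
The first assertion follows from Theorem~\ref{thm:congruence}, and the second from an explicit norm-form construction in which each equation gets its own block of variables. Throughout, the only property of the slice transform used is Lemma~\ref{lem:slices}, applied to homogeneous polynomials.

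\textbf{The divisibility statement.} If $n>h\sum_i d_i=E$, then $n-E\ge1$, so $\mu\ge\lceil 1/D\rceil=1$. Theorem~\ref{thm:congruence} then gives $Q\mid N+(M-1)\delta$. Since $M-1=Q+Q^2+\cdots+Q^{h-1}$, we have $Q\mid M-1$ (trivially when $h=1$). Hence $Q\mid N$.

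\textbf{Reduction of the sharpness statement.} Let $n=E$. The key observation is that for a homogeneous skew polynomial $f$ of degree $d$, \eqref{eq:transform} gives $f^{[\omega]}(u)=A_d(\omega)f(u)$, where $f(u)$ means the same coefficients read as an ordinary polynomial. Since $A_d(\omega)\neq0$, every slice $\omega K^n$ has the same zero set, namely the $K$-zeros of the ordinary form. So it suffices to find ordinary homogeneous forms $g_1,\ldots,g_r\in F[u_1,\ldots,u_n]$ with $\deg g_i=d_i$ exactly, whose only common zero in $K^n$ is the origin. Taking $f_i$ to be the skew polynomial with the coefficients of $g_i$ then gives $Z_\omega=1$ for every $\omega$. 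The origin is common to all slices and the slices otherwise meet only there (Lemma~\ref{lem:slices}), so $N=1$.

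\textbf{Construction of the forms.} Split the $n=\sum_i hd_i$ variables into disjoint blocks, with block $i$ of size $hd_i$. On block $i$, let $L_i=\F_{Q^{hd_i}}$, which contains $F$ with $[L_i:F]=d_i$. Fix a $K$-basis $\gamma_1,\ldots,\gamma_{hd_i}$ of $L_i$ and set
\[
 g_i(u)=\Nm_{L_i/F}\Bigl(\sum_k\gamma_ku_k\Bigr)=\prod_{j=0}^{d_i-1}\Bigl(\sum_k\gamma_k^{q^j}u_k\Bigr).
\]
Three properties need checking.
\begin{enumerate}
\item Each $g_i$ is a product of $d_i$ linear forms, so it is homogeneous of degree at most $d_i$.
\item Its coefficients lie in $F$: the $q$-power Frobenius permutes the linear factors, so it fixes every coefficient.
\item At $u\in K^{hd_i}$ we have $g_i(u)=\Nm_{L_i/F}(x)$ with $x=\sum_k\gamma_ku_k$. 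This vanishes only when $x=0$, and by linear independence of the $\gamma_k$ over $K$ that forces $u=0$.
\end{enumerate}
Because the blocks are disjoint, the only common $K$-zero of $g_1,\ldots,g_r$ is the origin.

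The step needing most care is exactness of the degree, i.e.\ that $g_i\neq0$ so it really has degree $d_i$. This holds because $g_i$ takes nonzero values at nonzero $K$-points. If desired, one can also cite \cite[Proposition~4.0]{Katz} as a cross-check.
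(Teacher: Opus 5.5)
Your proof is correct. The divisibility half is exactly the paper's argument: $\mu\ge1$, then Theorem~\ref{thm:congruence}, then $Q\mid M-1$.

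For the boundary example you use a genuinely different norm construction. The paper (Proposition~\ref{prop:normboundary}) works at the scalar level. It takes $rh$ disjoint blocks of $d_i$ variables and places an absolute norm form $H_{d_i}$ from $\F_{Q^{d_i}}$ to $K$ on each block. It then packages the $h$ scalar forms of the $i$th equation into one skew form through a $K$-basis of $F$, as in \eqref{eq:encode}. Exactness of degree comes from the linear independence of the $\beta_j$ (Proposition~\ref{prop:transfer}), and $N=1$ follows from \eqref{eq:homocount}.

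You instead use one block of $hd_i$ variables per equation and a single relative norm $\Nm_{L_i/F}$, precomposed with a $K$-linear isomorphism $K^{hd_i}\to L_i$. This replaces the encoding basis by two steps:
\begin{itemize}
\item a Galois-descent check that the coefficients lie in $F$ (the $q$-Frobenius cyclically permutes the factors);
\item the observation that at $K$-points the factors are the $q^j$-th powers of $\sum_k\gamma_ku_k$.
\end{itemize}
Your exact-degree argument is also fine, since every monomial of a product of $d_i$ linear forms has degree $d_i$ and the product is nonzero.

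What each route buys: yours is more intrinsic, giving one anisotropic $F$-form per equation. In effect it is the restriction of scalars of a norm form, and its $h$ scalar coordinates need not sit on separate variable blocks. It also extends to $n>E$ by free variables, giving $Z=Q^{n-E}$. The paper's route stays inside the scalar-encoding framework it reuses for the full valuation sharpness in Theorem~\ref{thm:sharp}. There one scalar coordinate of degree $D$ must be treated differently from the others (replaced by the form of Lemma~\ref{lem:singleform}), a separation your single relative norm does not directly provide.
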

\begin{proof}
The hypothesis gives $\mu\ge1$. Since $M-1=Q+\cdots+Q^{h-1}$, with the empty sum interpreted as $0$, Theorem~\ref{thm:congruence} implies $Q\mid N$. The boundary example is constructed in Proposition~\ref{prop:normboundary} below.
\end{proof}

\begin{corollary}[Obstruction to an uncorrected congruence]\label{cor:valuationcap}
Suppose $h>1$, $\delta=1$ and $\mu\ge2$. Then
\begin{equation}\label{eq:valuationcap}
 \vp(N)=a.
\end{equation}
\end{corollary}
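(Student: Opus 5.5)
By Theorem~\ref{thm:congruence}, with $\delta=1$ and $\mu\ge2$ we have $Q^2\mid N+(M-1)$, so
\[
 N\equiv-(M-1)\pmod{Q^2}.
\]
The plan is to compute $\vp(M-1)$ exactly and then transfer that valuation to $N$. This needs nothing beyond Theorem~\ref{thm:congruence} and the arithmetic fact about $M-1$ recorded in Section~\ref{sec:slices}.

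First, since $h>1$ we can write
\[
 M-1=Q+Q^2+\cdots+Q^{h-1}=Q\,(1+Q+\cdots+Q^{h-2}).
\]
The second factor is a sum of $h-1\ge1$ terms. Its first term is $1$ and every other term is divisible by $p$, so it is $\equiv1\pmod p$, hence a $p$-adic unit. Therefore $\vp(M-1)=\vp(Q)=a$. Note that $M-1>0$, so this valuation is finite.

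Second, the congruence says $N=-(M-1)+Q^2m$ for some integer $m$. Here $\vp(Q^2m)\ge2a>a=\vp(M-1)$, using $a\ge1$. By the ultrametric property of $\vp$ (the strict inequality case), $\vp(N)=\min\{a,\vp(Q^2m)\}=a$. In particular $N\neq0$, which is consistent with $\delta=1$, so the origin is a zero.

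The only point needing care, and it is minor, is to use only $Q^2\mid\Ccount$ rather than the full $Q^\mu$. This is legitimate because $\mu\ge2$ and the divisibility is monotone in the exponent. We must also make sure that $h>1$ is used: it guarantees $M-1\neq0$. When $h=1$ we have $M-1=0$, the correction term vanishes, and no cap on $\vp(N)$ exists. No step presents a genuine obstacle.
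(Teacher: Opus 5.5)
Your proposal is correct and follows the paper's own proof. You use Theorem~\ref{thm:congruence} to get $Q^2\mid N+(M-1)$, compute $\vp(M-1)=a$ from $M-1=Q(1+Q+\cdots+Q^{h-2})$, and conclude by the strict case of the ultrametric inequality.
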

\begin{proof}
Theorem~\ref{thm:congruence} implies $p^{2a}\mid\Ccount$. On the other hand,
\[
 M-1=Q(1+Q+\cdots+Q^{h-2})
\]
has $p$-adic valuation $a$. Subtracting it from $\Ccount$ proves \eqref{eq:valuationcap}.
\end{proof}

\begin{theorem}[Sharp lower bound with a prescribed zero]\label{thm:lower}
If $\delta=1$ and $n\ge E$, then
\begin{equation}\label{eq:lower}
 N\ge1+M(Q^{n-E}-1).
\end{equation}
For every positive degree table and every $n\ge E$, equality occurs for a homogeneous system of those exact degrees.
\end{theorem}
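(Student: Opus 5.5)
The argument runs slice by slice, exactly as in Theorem~\ref{thm:congruence}, with Warning's second theorem in place of Ax--Katz. Fix $\omega\in\Omega$ and expand $f_i^{[\omega]}=\sum_j\beta_j g_{ij,\omega}$ as in \eqref{eq:coordinates}. Since $\delta=1$, the origin $u=0$ is a common zero of all scalar coordinates on every slice. Hence no remaining coordinate is a nonzero constant after zero coordinates are deleted, and the scalar zero set on $K^n$ is nonempty.

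If no coordinates remain, then $Z_\omega=Q^n\ge Q^{n-E}$. Otherwise the degree sum satisfies $B_\omega\le E\le n$, and Warning's second theorem gives $Z_\omega\ge Q^{n-B_\omega}\ge Q^{n-E}$. Summing over the $M$ slices and using \eqref{eq:countidentity} with $\delta=1$ yields
\[
N=\sum_{\omega}Z_\omega-(M-1)\ge MQ^{n-E}-M+1,
\]
which is \eqref{eq:lower}.

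For equality, I would build a homogeneous system that vanishes on every slice exactly on a $K$-subspace of dimension $n-E$. Then $Z_\omega=Q^{n-E}$ for all $\omega$, and equality in the sum follows. The natural choice uses norm forms, presumably the construction of Proposition~\ref{prop:normboundary}. Split the first $E$ variables into $r$ blocks, giving block $i$ exactly $hd_i$ variables, and leave the last $n-E$ variables unused. On block $i$, take a homogeneous skew polynomial $f_i$ of degree $d_i$ such that, for every $\omega$, $f_i^{[\omega]}$ restricted to $K^{hd_i}$ vanishes only at $0$. If $d_i=1$, take $f_i=\sum_{j=1}^h\beta_jx_{i,j}$, where the $\beta_j$ form a $K$-basis. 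Then $f_i^{[\omega]}=\omega\sum_j\beta_ju_{i,j}$, which has only the trivial zero. For general $d_i$, I would use
\[
f_i=\sum_{k=1}^{d_i}\sum_{j=1}^h\beta_j\,\gamma_k\,x_{i,j,k}^{d_i},
\]
where the $\gamma_k$ are chosen so that the map $(u_{j,k})\mapsto\sum_{j,k}\beta_j\gamma_ku_{j,k}^{d_i}$ has only the trivial zero on $K^{hd_i}$.

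The hard part is this anisotropic homogeneous form of degree $d_i$ in $hd_i$ variables over $K$, with $F$-coefficients. The twisting factor $A_{d_i}(\omega)$ multiplies the whole homogeneous form, so it does not affect the zero set. A cleaner route is to view $F$-valued forms over $K$ as $h$-tuples of $K$-forms. I would take a $K$-anisotropic form $\Phi$ of degree $d_i$ in $d_ih$ variables, and pack an $h$-tuple of $K$-forms into one $F$-valued form whose only common zero is $0$. The standard example is the norm form $\Nm_{\F_{Q^{d_ih}}/K}$, which has degree $d_ih$ rather than $d_i$, so it cannot be used directly. Instead I would take $h$ coordinate forms $\Phi_1,\dots,\Phi_h$, each of degree $d_i$, on the block. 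Each is a norm form $\Nm_{\F_{Q^{d_i}}/K}$ on its own sub-block of $d_i$ variables, so it vanishes only when that sub-block is zero. Set $f_i=\T_\omega^{-1}$-independent $\sum_j\beta_j\Phi_j$, written with all monomials of degree $d_i$. Homogeneity makes $f_i^{[\omega]}=A_{d_i}(\omega)\sum_j\beta_j\Phi_j$ for every $\omega$. Its common zeros on the block are then the points where every sub-block vanishes, that is, only $0$.

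Consequently each slice has exactly the free $K^{n-E}$ of zeros, so $Z_\omega=Q^{n-E}$ and equality holds. The degrees are exact because $\Nm_{\F_{Q^{d_i}}/K}$ has degree $d_i$ and is nonzero. The remaining check is that the block sizes add up to exactly $E$, and this confirms that all the degree budget is consumed.
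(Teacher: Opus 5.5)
Your proof is correct and is essentially the paper's: Warning's bound on each slice, summed through \eqref{eq:countidentity}, with $\delta=1$ making every slice nonempty and the all-zero case giving $Z_\omega=Q^n$; and for equality the $rh$ disjoint norm blocks $H_{d_i}$ encoded as $f_i=\sum_j\beta_j g_{ij}$, which is exactly Proposition~\ref{prop:normboundary} together with the homogeneous transfer $N=1+M(Z-1)$. You should delete the diagonal-form detour $\sum_{j,k}\beta_j\gamma_k x_{i,j,k}^{d_i}$, since it cannot work in general: over $K=\F_2$ one has $u^{d_i}=u$, so it defines a $K$-linear map $K^{hd_i}\to F$, and this map has a nonzero kernel once $d_i\ge2$.
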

\begin{proof}
Every slice system \eqref{eq:coordinates} has the origin as a zero. Its scalar degree sum is at most $E$. Warning's bound gives $Z_\omega\ge Q^{n-E}$; if all its scalar equations vanish, the same inequality holds because $Z_\omega=Q^n$. Sum these inequalities in \eqref{eq:countidentity}. The equality construction is Proposition~\ref{prop:normboundary}.
\end{proof}

\subsection{Reading the exponent and the lower bound}
If $n\le E$, the uniform exponent $\mu$ is zero, so the divisibility assertion imposes no restriction beyond integrality. If $n=E+1$, the exponent is one. Each additional block of $D$ variables can increase the guaranteed exponent by one. This explains why the ceiling function, rather than the quotient itself, occurs in the statement.

The origin hypothesis in the numerical lower bound serves a different purpose. A divisibility statement can be applied to an empty slice, since zero is divisible by every positive integer. A lower bound for a nonempty set cannot be applied to an empty slice. When $\delta=1$, the origin certifies nonemptiness on every slice simultaneously. Without that certification, a nonempty total skew zero set need not meet all slices, so the same summation of lower bounds is unavailable.

For the raw-count obstruction, let $\mathcal C=p^{2a}v$ and $M-1=p^au$ with $p\nmid u$. Then
\[
 N=p^a(p^av-u),\qquad p\nmid(p^av-u).
\]
This explicitly verifies the valuation in Corollary~\ref{cor:valuationcap}. It also shows why omitting the correction is a mathematical change to the theorem, rather than a harmless simplification of notation.
\section{Homogeneous transfer and all-parameter sharpness}\label{sec:sharpness}

In this section, a homogeneous system means that each equation is homogeneous of its specified positive degree; the degrees need not be equal. If $f_i$ is homogeneous of degree $d_i$, then
\begin{equation}\label{eq:homotransform}
 f_i^{[\omega]}=A_{d_i}(\omega) f_i.
\end{equation}
The scalar multiplier is nonzero. Consequently all the slices have the same ordinary zero set
\[
 X_K=\{u\in K^n:f_i(u)=0\text{ for all }i\},\qquad Z=|X_K|,
\]
where the evaluation in this definition is ordinary evaluation of the displayed expressions.

\begin{proposition}[Homogeneous transfer]\label{prop:transfer}
For a homogeneous system,
\begin{equation}\label{eq:homocount}
 N=1+M(Z-1),\qquad\Ccount=MZ.
\end{equation}
Conversely, given nonzero homogeneous $K$-forms $g_{ij}$ of exact degree $d_i$, for $1\le i\le r$, $1\le j\le h$, the skew polynomials
\begin{equation}\label{eq:encode}
 f_i=\sum_{j=1}^h\beta_j g_{ij}
\end{equation}
have exact degrees $d_i$, and their set $X_K$ is the common zero set of the $g_{ij}$.
\end{proposition}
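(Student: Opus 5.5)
The first assertion follows directly from the slice decomposition. By \eqref{eq:homotransform}, $f_i^{[\omega]}=A_{d_i}(\omega)f_i$, where $f_i$ is read as an ordinary polynomial with the same left coefficients. Since $A_{d_i}(\omega)\ne0$, for $u\in K^n$ we have $f_i^{[\omega]}(u)=0$ exactly when $f_i(u)=0$ under ordinary evaluation. Hence $Z_\omega=Z$ for every $\omega\in\Omega$. Each $f_i$ is homogeneous of positive degree, so it has no constant term and the origin is a common zero; thus $\delta=1$. The integer identity \eqref{eq:countidentity} then reads $MZ=N+(M-1)$. This gives $N=1+M(Z-1)$ and $\Ccount=MZ$. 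Alternatively, one can count directly: $\Sigma_n\setminus\{0\}$ is the disjoint union of the sets $\omega(K^n\setminus\{0\})$ by Lemma~\ref{lem:slices}, and each contributes $Z-1$ nonzero zeros.

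For the converse, first I will check the exact degrees. Each $g_{ij}$ is a form of exact degree $d_i$ with coefficients in $K$. The coefficient of a monomial $u^\alpha$ in $f_i=\sum_j\beta_jg_{ij}$ is $\sum_j\beta_jc_{ij,\alpha}$, where $c_{ij,\alpha}\in K$. By $K$-linear independence of the $\beta_j$, this coefficient vanishes only when all $c_{ij,\alpha}$ vanish. Pick $j$ and a monomial of degree $d_i$ with $c_{ij,\alpha}\ne0$; then that monomial survives in $f_i$. Only degree-$d_i$ monomials occur, so $f_i$ is homogeneous of exact degree $d_i$, and in particular nonzero.

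Next I will identify $X_K$. For $u\in K^n$, all values $g_{ij}(u)$ lie in $K$. So $f_i(u)=\sum_j\beta_jg_{ij}(u)$ is zero exactly when every $g_{ij}(u)=0$, again by independence of the $\beta_j$. This is the same coordinate argument as in the proof of Theorem~\ref{thm:congruence}. It is applied here to the ordinary evaluation of $f_i$, which is what defines $X_K$.

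I expect no step to be a genuine obstacle. The only point requiring care is keeping the two evaluations distinct. $X_K$ is defined by ordinary evaluation of the displayed expressions, while $N$ uses skew evaluation. The bridge between them is exactly \eqref{eq:evaluation} combined with \eqref{eq:homotransform}, and it requires the scalar $A_{d_i}(\omega)$ to be nonzero.
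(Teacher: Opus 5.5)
Your proof is correct and follows the paper's argument essentially step by step: the first part comes from \eqref{eq:homotransform} giving $Z_\omega=Z$, together with $\delta=1$ and the slice identity \eqref{eq:countidentity}, and the converse uses $K$-linear independence of the $\beta_j$ both coefficientwise (for the exact degree) and pointwise on $K^n$ (for $X_K$). Your care in separating ordinary from skew evaluation and your alternative direct count via the disjoint punctured slices are both sound, though not needed.
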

\begin{proof}
The first part follows from \eqref{eq:homotransform} and \eqref{eq:countidentity}, since the origin is a zero. In \eqref{eq:encode}, linear independence of the $\beta_j$ makes vanishing over $K^n$ equivalent to the vanishing of all $g_{ij}$. To check the degree assertion coefficient by coefficient, write the coefficient of $u^\alpha$ in $g_{ij}$ as $c_{ij,\alpha}\in K$. The corresponding coefficient in $f_i$ is $\sum_j\beta_jc_{ij,\alpha}$. If all degree-$d_i$ coefficients vanished, independence of the $\beta_j$ would force every $c_{ij,\alpha}$ to vanish, contrary to the specified exact degrees. Thus the same independence also prevents the degree-$d_i$ coefficients from all cancelling. Hence each degree is exactly $d_i$.
\end{proof}

We use finite-field norm forms in the constructions. For $d\ge1$, choose a $K$-basis $e_1,\ldots,e_d$ of $K_d=\F_{Q^d}$ and put
\begin{equation}\label{eq:normform}
 H_d(z_1,\ldots,z_d)=\Nm_{K_d/K}(e_1z_1+\cdots+e_dz_d).
\end{equation}
This is a nonzero homogeneous $K$-form of degree $d$ whose only zero on $K^d$ is the origin. The same assertion holds when it is restricted to the span of any nonempty linearly independent subset of this basis, except that its degree remains $d$. To see the degree assertion, write the norm as a product of $d$ nonzero conjugate linear forms. More explicitly, the product expression is
\[
 H_d(z)=\prod_{t=0}^{d-1}\left(\sum_{j=1}^d e_j^{Q^t}z_j\right).
\]
On a nonempty coordinate subspace every factor remains a nonzero linear form. Their product cannot be zero as a polynomial because a polynomial ring over a field is an integral domain. Thus restricting the number of variables does not reduce the formal degree. The only-zero assertion follows instead from injectivity of the coordinate map into $K_d$ and the fact that a field norm vanishes only at zero. These are separate reasons for the two properties of the restricted form. The auxiliary extension $K_d$ need not be a subfield of $F$.

\begin{proposition}[Boundary and minimum-count examples]\label{prop:normboundary}
For every positive degree table and every $n\ge E$, there is a homogeneous system of the prescribed degrees such that
\[
 Z=Q^{n-E},\qquad N=1+M(Q^{n-E}-1).
\]
\end{proposition}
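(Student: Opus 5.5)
By Proposition~\ref{prop:transfer}, it is enough to build nonzero homogeneous $K$-forms $g_{ij}$, for $1\le i\le r$ and $1\le j\le h$, of exact degree $d_i$, whose common zero set in $K^n$ has exactly $Q^{n-E}$ points. Encoding them by \eqref{eq:encode} then gives skew polynomials $f_i$ of exact degree $d_i$ with $X_K$ equal to that zero set. The transfer formula \eqref{eq:homocount} then yields $N=1+M(Q^{n-E}-1)$ directly.

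The construction uses disjoint blocks of variables. There are $rh$ pairs $(i,j)$, and the block sizes add up to $\sum_{i,j}d_i=h\sum_i d_i=E$. Since $n\ge E$, we can partition the first $E$ coordinates of $K^n$ into disjoint blocks $V_{ij}$ with $|V_{ij}|=d_i$, leaving $n-E$ free coordinates. On each block we set
\[
 g_{ij}=H_{d_i}(z^{(ij)}),
\]
the norm form \eqref{eq:normform} in the $d_i$ variables of $V_{ij}$. It is a nonzero homogeneous $K$-form of exact degree $d_i$, and its only zero on $K^{d_i}$ is the origin. Viewed in all $n$ variables it stays homogeneous of degree $d_i$.

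Because the blocks are disjoint, a point $u\in K^n$ is a common zero exactly when every block coordinate vector vanishes. The remaining $n-E$ coordinates are arbitrary. So the zero set is $\{0\}^E\times K^{n-E}$, and $Z=Q^{n-E}$. The origin is a zero, as the transfer formula requires.

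There is no real obstacle; the argument is bookkeeping. The one point to check is the exact-degree claim in Proposition~\ref{prop:transfer}. That proposition needs each $g_{ij}$ to be nonzero of exact degree $d_i$, which holds here because the forms are norm forms on full blocks. In the edge case $n=E$ there are no free coordinates, so $Z=1$ and $N=1$, which is exactly the boundary example used in Corollary~\ref{cor:threshold}.
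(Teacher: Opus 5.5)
Your proposal is correct and follows essentially the same route as the paper: you place disjoint norm-form blocks of sizes $d_i$ on $E$ of the coordinates, leave $n-E$ coordinates free to get $Z=Q^{n-E}$, encode via \eqref{eq:encode}, and apply \eqref{eq:homocount}. Your extra check of the exact-degree hypothesis in Proposition~\ref{prop:transfer} and of the edge case $n=E$ is accurate.
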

\begin{proof}
Partition $E$ variables into $rh$ disjoint blocks, giving the $(i,j)$-block exactly $d_i$ variables. On that block put a copy $g_{ij}$ of $H_{d_i}$. Each equation $g_{ij}=0$ forces its entire block to vanish. The remaining $n-E$ coordinates are free, so $Z=Q^{n-E}$. Encode by \eqref{eq:encode} and apply \eqref{eq:homocount}.
\end{proof}

We next prove that the congruence exponent is sharp for every parameter choice. The corresponding classical existence statement is \cite[Proposition~4.0]{Katz}. The following explicit calculation supplies a direct construction and records the treatment of incomplete norm blocks.

\begin{lemma}[A sum of complete norm blocks]\label{lem:normsum}
Let $D\ge2$ and $L\ge1$. For $L$ disjoint blocks of $D$ variables, let $G_L$ be the sum of the copies of $H_D$ on these blocks. Set
\[
 S=\frac{Q^D-1}{Q-1},\qquad T=1-S=-Q(1+Q+\cdots+Q^{D-2}).
\]
The zero fiber of $G_L$ has size $A_L$, and every nonzero fiber has size $B_L$, where
\begin{equation}\label{eq:normfibers}
 A_L=\frac{Q^{DL}+(Q-1)T^L}{Q},\qquad
 B_L=\frac{Q^{DL}-T^L}{Q}.
\end{equation}
In particular, $\vp(A_L)=a(L-1)$.
\end{lemma}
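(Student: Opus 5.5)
We compute the fiber sizes of $G_L$ by induction on $L$, using the fiber sizes of a single norm form $H_D$. The norm map $\Nm_{K_D/K}\colon K_D^\times\to K^\times$ is surjective with every fiber of size $S=(Q^D-1)/(Q-1)$. Since $z\mapsto e_1z_1+\cdots+e_Dz_D$ is a $K$-linear bijection $K^D\to K_D$, the form $H_D$ on one block has zero fiber of size $1$ and each nonzero fiber of size $S$. This gives the case $L=1$: $A_1=1$ and $B_1=S$. These agree with \eqref{eq:normfibers}, because $(Q^D+(Q-1)(1-S))/Q=(Q^D-Q^D+Q)/Q=1$ and $(Q^D-1+S)/Q=S$, using $(Q-1)S=Q^D-1$.

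For the inductive step, we write $G_{L+1}=G_L+H_D$ on disjoint variables. By translation symmetry in $K$, every nonzero value $c$ has the same fiber size for $G_L$; this holds at $L=1$ and is preserved, as the recursion below shows. Counting solutions of $G_L(y)+H_D(z)=c$ gives
\[
A_{L+1}=A_L\cdot1+(Q-1)B_L S,\qquad B_{L+1}=A_L S+B_L\cdot1+(Q-2)B_L S,
\]
where the first uses that $G_L=-H_D(z)$ is nonzero exactly when $z\neq0$. For the second, the terms correspond to $H_D(z)=c$, to $H_D(z)=0$, and to $H_D(z)$ nonzero and different from $c$. It is cleaner to use two invariants. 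The total count gives $A_L+(Q-1)B_L=Q^{DL}$. The difference $A_{L+1}-B_{L+1}=(A_L-B_L)(1-S)=T(A_L-B_L)$, with $A_1-B_1=1-S=T$, gives $A_L-B_L=T^L$. Solving these two linear equations yields \eqref{eq:normfibers} directly. Equivalently, one can phrase this via additive characters: the character sum of $H_D$ at a nontrivial character is $1-S=T$, it multiplies over blocks, and Fourier inversion gives the formulas. I would present the elementary two-invariant version.

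For the valuation, write $QA_L=Q^{DL}+(Q-1)T^L$. We have $\vp(T)=a$, since $1+Q+\cdots+Q^{D-2}\equiv1\pmod p$ (this is where $D\ge2$ is needed). Also $\vp(Q-1)=0$, so $\vp((Q-1)T^L)=aL$. Because $D\ge2$, we get $DL>L$, so $\vp(Q^{DL})=aDL>aL$. The sum therefore has valuation exactly $aL$, and dividing by $Q$ gives $\vp(A_L)=a(L-1)$.

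The main point needing care is justifying that all nonzero fibers have equal size, which the recursion needs. It follows inductively from the $K^\times$-equivariance of the norm fibers. Alternatively, the scaling $z\mapsto\lambda z$ with $\lambda\in K^\times$ multiplies $H_D$ by $\lambda^D$, which need not be transitive on $K^\times$, so I will rely on the inductive fiber structure rather than on scaling. Beyond that, the argument is routine bookkeeping.
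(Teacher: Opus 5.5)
Your proposal is correct and follows the paper's proof essentially step for step. Both use the single-block fiber sizes from surjectivity of the norm, the same convolution recurrences, the two invariants $A_L+(Q-1)B_L=Q^{DL}$ and $A_L-B_L=T^L$, and the same comparison of valuations $aDL>aL$. The only divergence is how you show that all nonzero fibers have equal size. You read it off inductively from the recursion, which is valid, though ``translation symmetry in $K$'' is not a reason. The paper instead multiplies every block by an element of $K_D^\times$ with prescribed norm, which scales $G_L$ by that norm and so permutes the nonzero fibers transitively; this is exactly the repair of the $\lambda\in K^\times$ scaling you rejected.
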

\begin{proof}
The norm on nonzero elements is $z\mapsto z^S$. Since $K_D^\times$ is cyclic of order $Q^D-1=(Q-1)S$, its image has order $Q-1$ and its kernel has order $S$. Its image is therefore all of $K^\times$, and every nonzero fiber has size $S$. Thus $A_1=1$, $B_1=S$. Multiplying all $L$ blocks simultaneously by an element with prescribed nonzero norm gives a bijection between any two nonzero fibers of $G_L$.

Convolution with the next norm block yields
\[
 A_{L+1}=A_L+(Q-1)SB_L,\qquad
 B_{L+1}=SA_L+\bigl(1+(Q-2)S\bigr)B_L.
\]
For the first recurrence, the old sum and the new norm are either both zero, or are opposite nonzero values. There are $Q-1$ possibilities in the second case. For the second recurrence, fix a nonzero target $c$. If the old sum is zero, the new norm must equal $c$, contributing $SA_L$. If the old sum is $c$, the new block must be zero, contributing $B_L$. Each of the other $Q-2$ nonzero old values requires a specified nonzero norm, contributing $SB_L$ each. This accounts for every input once. Their difference is $T(A_L-B_L)$. Hence $A_L-B_L=T^L$, while the total number of inputs gives $A_L+(Q-1)B_L=Q^{DL}$. Solving proves \eqref{eq:normfibers}. The factor $1+Q+\cdots+Q^{D-2}$ is $1$ modulo $p$, so $\vp(T)=a$. Since $D\ge2$, the terms $Q^{DL}$ and $(Q-1)T^L$ have distinct valuations $aDL$ and $aL$. Division by $Q$ gives the assertion about $A_L$.
\end{proof}

\begin{lemma}[A sharp single form]\label{lem:singleform}
For $D\ge2$ and $m\ge D$, there is a homogeneous $K$-form of exact degree $D$ in $m$ variables whose zero count $Z_m$ satisfies
\begin{equation}\label{eq:singleval}
 \vp(Z_m)=a\ceil{\frac{m-D}{D}}.
\end{equation}
\end{lemma}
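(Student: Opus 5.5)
Write $m=DL+\rho$ with $L\ge1$ and $0\le\rho<D$; this is possible because $m\ge D$. Then $\ceil{(m-D)/D}=L-1$ if $\rho=0$ and $=L$ if $\rho\ge1$. The plan is to build every form from the complete norm-block sum $G_L$ of Lemma~\ref{lem:normsum} and read off its zero count from the fiber sizes $A_L,B_L$ in \eqref{eq:normfibers}. Exact degree $D$ is automatic: each block is a copy of $H_D$ on its own variables, so no cancellation between blocks is possible and each block keeps its formal degree $D$ (as recorded after \eqref{eq:normform}). I split into three cases according to $\rho$.

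\emph{Case $\rho=0$.} Take the form $G_L$ itself. Its zero count is $A_L$, and Lemma~\ref{lem:normsum} gives $\vp(A_L)=a(L-1)$, which is the claim.

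\emph{Case $\rho=1$.} Take $G_L$ on the first $DL$ variables and leave the last variable free. Then $Z_m=Q\,A_L$, so $\vp(Z_m)=a+a(L-1)=aL$, as required.

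\emph{Case $2\le\rho\le D-1$.} Add to $G_L$ an incomplete block $H'$. This is $H_D$ restricted to the span of $e_1,\ldots,e_\rho$, written in $\rho$ new variables. By the discussion after \eqref{eq:normform}, $H'$ still has degree $D$ and vanishes only at the origin of $K^\rho$, so it takes nonzero values at the other $Q^\rho-1$ points. Condition on the value $y=H'(w)$. Since every nonzero fiber of $G_L$ has the same size $B_L$, we get
\[
 Z_m=A_L+(Q^\rho-1)B_L,
\]
and we never need to know how the values of $H'$ are distributed. Substituting \eqref{eq:normfibers} gives
\[
 QZ_m=Q^{DL+\rho}+T^L(Q-Q^\rho)=Q^{m}-Q\,T^L(Q^{\rho-1}-1).
\]
Here $\vp(T)=a$ and $\rho\ge2$, so $Q^{\rho-1}-1$ is a $p$-adic unit. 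Hence the second term has valuation $a(L+1)$. This is strictly less than $am$, because $m\ge 2L+2>L+1$. Therefore $\vp(QZ_m)=a(L+1)$, and $\vp(Z_m)=aL$.

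The main obstacle is the case $\rho=1$. There the incomplete-block formula degenerates, since $Q-Q^\rho=0$ and the count becomes $Q^{DL}$ with the wrong valuation. Using a free variable instead, as above, avoids this. The remaining steps are routine valuation comparisons.
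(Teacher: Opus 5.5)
Your proof is correct and follows the paper's argument essentially step by step: the same decomposition $m=DL+\rho$; the same three constructions ($G_L$ alone, $G_L$ with one free variable, and $G_L$ plus a restricted norm block); and the same count $A_L+(Q^\rho-1)B_L$ followed by a valuation comparison. The only differences are cosmetic: you compare valuations in $QZ_m$ rather than in $Z_m$, and you restrict $H_D$ to the span of $e_1,\ldots,e_\rho$ instead of an arbitrary $\rho$-dimensional subspace.
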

\begin{proof}
Write $m=DL+e$, where $L\ge1$ and $0\le e<D$. If $e=0$, use $G_L$; Lemma~\ref{lem:normsum} gives valuation $a(L-1)$. If $e=1$, use $G_L$ and leave the last variable free. Its zero count is $QA_L$ and has valuation $aL$.

Suppose $2\le e<D$. On the remaining $e$ variables take the restriction $R_e$ of $H_D$ to an $e$-dimensional $K$-subspace of $K_D$. This form vanishes only at the origin. For the form $G_L+R_e$, the zero count is
\begin{align}\label{eq:incompleteblock}
 Z_m&=A_L+(Q^e-1)B_L\\
 &=Q^{DL+e-1}+T^L(1-Q^{e-1}).\notag
\end{align}
The factor $1-Q^{e-1}$ is congruent to $1$ modulo $p$ because $e\ge2$, and so is a $p$-adic unit. Hence the second term has valuation $aL$. For the first term, the strict comparison follows from
\[
 DL+e-1-L=L(D-1)+e-1>0.
\]
The two terms therefore have distinct valuations $a(DL+e-1)>aL$ and $aL$, respectively. A sum of integers with distinct $p$-adic valuations has the smaller valuation: factor out the smaller power of $p$, leaving a unit plus a multiple of $p$. Thus no cancellation can alter the claimed value. This proves \eqref{eq:singleval}. Each form has exact degree $D$ because its nonzero summands use disjoint variable blocks. For $e>0$, $\lceil(m-D)/D\rceil=L$, as required.
\end{proof}

\begin{remark}\label{rem:onefree}
The case $e=1$ must be treated separately: substituting $e=1$ in \eqref{eq:incompleteblock} annihilates its second term, so that construction does not certify sharpness. A free variable gives the required valuation.
\end{remark}

\begin{theorem}[Sharpness for every degree table]\label{thm:sharp}
For every field extension and automorphism in \eqref{eq:fields}, every $n\ge1$, and every positive degree table $(d_1,\ldots,d_r)$, there is a homogeneous skew system of those exact degrees such that
\begin{equation}\label{eq:sharpval}
 \vp\bigl(N+M-1\bigr)=a\mu.
\end{equation}
Thus Theorem~\ref{thm:congruence} cannot be uniformly strengthened by even one additional factor of $p$ using only these parameters.
\end{theorem}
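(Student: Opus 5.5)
We will use the homogeneous transfer of Proposition~\ref{prop:transfer}. For a homogeneous system the origin is a common zero, so $\delta=1$ and $N+M-1=\Ccount=MZ$. Since $M\equiv1\pmod p$, we get $\vp(N+M-1)=\vp(Z)$. It therefore suffices to construct nonzero homogeneous $K$-forms $g_{ij}$ of exact degree $d_i$, for $1\le i\le r$ and $1\le j\le h$, in $n$ variables whose common zero count $Z$ satisfies $\vp(Z)=a\mu$. Encoding them via \eqref{eq:encode} then gives skew polynomials of the exact degrees $d_i$. Fix an index $i_0$ with $d_{i_0}=D$. Throughout, $E=h\sum_i d_i$ is the number of scalar equations weighted by degree.

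\emph{Case $n\le E$, so $\mu=0$.} We need $Z$ prime to $p$. We distribute the $n$ variables among the $rh$ equations as follows. Give each pair $(i,j)$ a block of $\min(d_i,\text{remaining})$ variables, filling blocks in order until the variables are exhausted. On a nonempty block of size $e\le d_i$, put the restriction of $H_{d_i}$ to an $e$-dimensional subspace. This form has exact degree $d_i$ and vanishes only at the origin of that block. For an equation whose block is empty, we still need a nonzero form of degree $d_i$. There we reuse a variable already constrained to vanish, say $g_{ij}=z_1^{d_i}$ with $z_1$ in the first block (note $n\ge1$). Such a form imposes nothing new. Since $n\le E$, every variable lies in some block and is forced to vanish, so $Z=1$ and $\vp(Z)=0$.

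\emph{Case $n>E$, so $\mu=\lceil(n-E)/D\rceil\ge1$.} Use complete norm blocks $H_{d_i}$ on disjoint blocks for all pairs except $(i_0,1)$. These consume $E-D$ variables and force them to vanish. The remaining $m=n-E+D\ge D+1$ variables carry the single equation $g_{i_0,1}$, and $Z$ equals the zero count of that one form in $m$ variables. If $D\ge2$, take the form of Lemma~\ref{lem:singleform}, which gives $\vp(Z)=a\lceil(m-D)/D\rceil=a\mu$. If $D=1$, all equations are linear with $E$ independent linear conditions. Taking $g_{i_0,1}$ to be a single coordinate gives $Z=Q^{n-E}$, so $\vp(Z)=a(n-E)=a\mu$.

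The main obstacle is the bookkeeping in the case $n\le E$. The issue is ensuring that every $g_{ij}$ has exact degree $d_i$ even when variables run out mid-block. Restricted norm forms handle a partial block, and a power of an already-killed variable handles an empty one. We must also check that these choices do not accidentally make some $g_{ij}$ the zero polynomial. Once that is done, the conclusion that no uniform strengthening is possible follows immediately: the construction realizes valuation exactly $a\mu$ for every parameter choice, so no extra factor of $p$ divides $N+M-1$ in general.
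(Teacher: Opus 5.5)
Your proposal is correct and follows essentially the same route as the paper. You reduce via $N+M-1=MZ$ and $M\equiv1\pmod p$ to a scalar construction. For $n\le E$ you cover the variables by restricted norm forms; your $z_1^{d_i}$ on empty blocks is, up to a nonzero scalar, the paper's restricted norm on the singleton $\{1\}$. For $n>E$ you freeze all but one degree-$D$ scalar equation with complete norm blocks and apply Lemma~\ref{lem:singleform} on the remaining $D+(n-E)$ variables, using coordinate linear forms when $D=1$; the bookkeeping points you flag (nonempty first block, exact degrees, $Z$ equal to the single-form count) all check out.
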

\begin{proof}
Form a scalar degree list by repeating every $d_i$ exactly $h$ times. Its sum is $E$ and its maximum is $D$. We first construct homogeneous scalar equations of these exact degrees with zero count $Z$ satisfying $\vp(Z)=a\mu$.

Suppose $n\le E$. Cover the $n$ coordinate indices by nonempty subsets indexed by the scalar equations, each subset having size at most its specified degree. To produce this cover explicitly, allocate each of the $n$ indices to one scalar equation whose capacity has not yet been filled. The total capacity is $E\ge n$, so this allocation can be completed. If an equation receives no index, assign it the singleton $\{1\}$. Capacities are positive, so that assignment is allowed. These subsets need not be disjoint; allowing overlap handles the case where there are more equations than variables. On each subset use a restricted norm form of its specified degree. The common zero set is the origin, so $Z=1$ and $\mu=0$.

Suppose $n>E$. If $D=1$, use $E$ independent coordinate linear equations. They have $Z=Q^{n-E}$ and $\mu=n-E$. If $D\ge2$, select one scalar equation of degree $D$. Use disjoint anisotropic norm blocks for all other equations, consuming $E-D$ variables and forcing them to vanish. There remain
\[
 m=n-(E-D)=D+(n-E)>D
\]
variables. On them use the form in Lemma~\ref{lem:singleform}. Its valuation is $a\lceil(n-E)/D\rceil=a\mu$, and the frozen blocks contribute one choice each.

Finally encode the scalar equations by \eqref{eq:encode}. Proposition~\ref{prop:transfer} gives $N+M-1=MZ$. Since $M\equiv1\pmod p$, this count has valuation $a\mu$.
\end{proof}

\begin{remark}
The equality problem for the numerical lower bound differs from the equality problem for the divisibility exponent. In Proposition~\ref{prop:normboundary}, the corrected count is $MQ^{n-E}$ and has valuation $a(n-E)$, which can exceed $a\mu$ when $D>1$.
\end{remark}

\subsection{A quadratic example attaining the exponent}
Take $K=\F_2$, $F=\F_4=K(\alpha)$, $h=2$, and one skew equation of degree $2$. Then $E=4$ and $D=2$. Put $H_2(u,v)=u^2+uv+v^2$, which is the norm from $\F_4$ to $\F_2$. It is $1$ at each of the three nonzero points of $K^2$ and $0$ at the origin.

In six variables choose the scalar equations
\[
 g_1=H_2(u_1,u_2),\qquad
 g_2=H_2(u_3,u_4)+H_2(u_5,u_6).
\]
The first equation forces $u_1=u_2=0$. The second is zero either when both blocks are zero, giving one input, or when both have norm one, giving $3\cdot3=9$ inputs. Thus $Z=10$. Encode them as $f=g_1+\alpha g_2$. Here $M=3$, so
\[
 N=1+3(10-1)=28,\quad \mathcal C=30,\quad
 \nu_2(\mathcal C)=1=\left\lceil\frac{6-4}{2}\right\rceil.
\]
This example exhibits the entire mechanism: freeze one block, count a norm sum on the remaining variables, encode two scalar equations into one skew equation, and correct for the common origin.

If instead $n=7$, leave one more variable free in this construction. Then $Z=20$, $N=58$, and $\mathcal C=60$ has valuation $2=\lceil(7-4)/2\rceil$. This is the exceptional remainder-one case of Lemma~\ref{lem:singleform}; it illustrates why the extra variable is left free rather than put into a restricted norm block.
\section{A refinement from the coefficient filtration}\label{sec:filtration}

The factor $h$ in $E$ allows for $h$ independent scalar equations per skew equation. A particular input can have fewer independent coordinates or smaller polynomial-function degrees. We now incorporate both effects. This section uses only linear algebra and the classical estimates from Section~\ref{sec:uniform}.

Every polynomial function on $K^n$ with values in $F$ has a unique representative of degree at most $Q-1$ in each variable. It is obtained by reduction modulo
\[
 (u_1^Q-u_1,\ldots,u_n^Q-u_n).
\]
Existence follows by division by these monic polynomials. Uniqueness can now be read directly from Corollary~\ref{c:cor:unique} on the grid $K^n$, or proved by induction with the univariate root bound. Thus the polynomial-function reduction used here is the same finite-grid normalization used in the CNS argument. Reduction does not increase total degree. Reduce every $f_i^{[\omega]}$ and collect them into one $W$-valued polynomial, where $W=F^r$ is viewed as a $K$-space of dimension $rh$:
\begin{equation}\label{eq:vectorpoly}
 P_\omega(u)=C_{0,\omega}+\sum_{\alpha\ne0} C_{\alpha,\omega}u^\alpha,
 \qquad C_{\alpha,\omega}\in W.
\end{equation}
If a nonconstant term occurs, let $D_\omega$ be the largest total degree of such a term and put
\begin{equation}\label{eq:flag}
 V_{e,\omega}=\Span_K\{C_{\alpha,\omega}:|\alpha|\ge e\},
 \qquad w_\omega=\sum_{e=1}^{D_\omega}\dim_K V_{e,\omega}.
\end{equation}
For a constant polynomial set $V_{1,\omega}=0$ and $w_\omega=0$.

The filtration records how many independent output directions are needed at each degree level. A direction required by a degree-three coefficient contributes at levels $1$, $2$ and $3$, while a direction introduced only in degree one contributes only once. This is why summing the dimensions of the nested spaces measures a sum of scalar degrees. A vector-space dimension by itself would forget at which degree each direction is needed.

\begin{lemma}[Optimal scalar degree sum]\label{lem:flagminimal}
Let $P=C_0+\sum_{\alpha\ne0}C_\alpha u^\alpha$ be a nonconstant $W$-valued polynomial, and let $V_e,D_P,w_P$ be defined as above. Suppose $C_0\in V_1$. Among all choices of $K$-coordinates on $W$, the minimum sum of the positive degrees of the nonzero scalar coordinates of $P$ is $w_P$. The largest degree among the scalar coordinates is always $D_P$. A single basis adapted to the flag $V_{D_P}\subseteq\cdots\subseteq V_1$ attains the minimum.
\end{lemma}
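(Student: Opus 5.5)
The plan is to work with linear functionals. A choice of $K$-coordinates on $W$ amounts to a basis $\lambda_1,\ldots,\lambda_{rh}$ of the dual space $W^*$, and the scalar coordinates of $P$ are then $\lambda_j\circ P=\lambda_j(C_0)+\sum_{\alpha\ne0}\lambda_j(C_\alpha)u^\alpha$. The first step is a formula for the degree of a single coordinate. Put
\[
\deg(\lambda\circ P)=\max\{e\ge1:\lambda|_{V_e}\ne0\}.
\]
This follows from the definition of $V_e$ as the span of the $C_\alpha$ with $|\alpha|\ge e$: the functional $\lambda$ is nonzero on $V_e$ exactly when $\lambda(C_\alpha)\ne0$ for some $|\alpha|\ge e$. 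I will also use the hypothesis $C_0\in V_1$ here. If $\lambda$ vanishes on $V_1$, then $\lambda(C_0)=0$ as well, so $\lambda\circ P$ is the zero polynomial rather than a nonzero constant. Hence every nonzero coordinate has positive degree, and that degree is given by the displayed formula. I expect this bookkeeping with the constant term to be the only delicate point, since without $C_0\in V_1$ a nonzero constant coordinate could appear and would need separate treatment.

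The second step rewrites the sum of degrees level by level. Each nonzero coordinate contributes $1$ at every level $e$ up to its degree, so
\[
\sum_{j}\deg(\lambda_j\circ P)=\sum_{e=1}^{D_P}\#\{j:\lambda_j|_{V_e}\ne0\}.
\]
For the lower bound, note that the restrictions $\lambda_j|_{V_e}$ span $V_e^*$, because the $\lambda_j$ span $W^*$ and restriction $W^*\to V_e^*$ is surjective. Therefore at least $\dim V_e$ of them are nonzero on $V_e$. Summing over $e$ gives a total of at least $w_P$ for every choice of coordinates.

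For the attaining choice, I will take a basis $b_1,\ldots,b_{rh}$ of $W$ adapted to the flag $V_{D_P}\subseteq\cdots\subseteq V_1\subseteq W$. Concretely, the first $\dim V_e$ vectors span $V_e$ for each $e$. Let $\lambda_j$ be the dual basis. Then $\lambda_j|_{V_e}\ne0$ exactly when $j\le\dim V_e$, so the count at level $e$ equals $\dim V_e$, and the sum of degrees equals $w_P$. The same basis works simultaneously for all levels, which is the final assertion of the lemma.

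For the maximum degree, no coordinate can exceed $D_P$ because there are no nonconstant terms beyond degree $D_P$. Conversely, $V_{D_P}\ne0$ since some $C_\alpha$ with $|\alpha|=D_P$ is nonzero. Because the $\lambda_j$ form a basis of $W^*$, some $\lambda_j$ is nonzero on $V_{D_P}$, and that coordinate has degree exactly $D_P$. This holds in every choice of coordinates.
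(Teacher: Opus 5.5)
Your proof is correct and follows essentially the same route as the paper: you rewrite the degree sum level by level, bound the count at level $e$ below by $\dim_K V_e$ (you phrase this dually via surjectivity of restriction $W^*\to V_e^*$, where the paper notes that all coefficients of degree at least $e$ are supported on the coordinates of degree at least $e$), and attain the bound with a basis adapted to the flag. Your treatment of the maximum degree ($V_{D_P}\ne0$ cannot be killed by every coordinate) and your use of $C_0\in V_1$ to rule out nonzero constant coordinates also match the paper's argument.
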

\begin{proof}
Fix any coordinates on $W$ and let $s_e$ be the number of scalar coordinates having degree at least $e$. Every coefficient of degree at least $e$ is supported on these $s_e$ coordinates, so $\dim_K V_e\le s_e$. If the positive scalar degrees are $b_1,\ldots,b_t$, then
\[
 \sum_{j=1}^t b_j=\sum_{e\ge1}s_e\ge\sum_{e\ge1}\dim_K V_e=w_P.
\]

Choose a basis $v_1,\ldots,v_{rh}$ such that each $V_e$ is spanned by $v_1,\ldots,v_{\dim V_e}$. Coefficients of degree at least $e$ have no coordinates beyond that initial segment. Conversely, each coordinate in the segment occurs in some such coefficient: otherwise their span would be a proper subspace of $V_e$. Therefore $s_e=\dim V_e$ for every $e$. The assumption $C_0\in V_1$ makes all coordinates outside $V_1$ identically zero, rather than nonzero constants. This proves attainment. Finally, an invertible output-coordinate change cannot annihilate a nonzero coefficient vector of degree $D_P$, so the maximum degree is unchanged.
\end{proof}

If $C_{0,\omega}\notin V_{1,\omega}$, then $P_\omega(u)\ne0$ for all $u\in K^n$, since its nonconstant part lies in $V_{1,\omega}$. Such a slice is certified empty. This includes a nonzero constant $P_\omega$. For every slice define
\begin{equation}\label{eq:localexponent}
 \eta_\omega=
 \begin{cases}
 +\infty,& C_{0,\omega}\notin V_{1,\omega},\\
 n,& P_\omega=0,\\
 \displaystyle\max\left\{0,\ceil{\frac{n-w_\omega}{D_\omega}}\right\},
 &\text{otherwise}.
 \end{cases}
\end{equation}
In the second case $P_\omega=0$ means the reduced polynomial is identically zero.

\begin{theorem}[Coefficient-filtration refinement]\label{thm:flag}
Suppose some $\eta_\omega$ is finite, and let $\eta$ be the least finite value. Then
\begin{equation}\label{eq:flagcongruence}
 Q^\eta\mid\Ccount,\qquad \eta\ge\mu.
\end{equation}
If every $\eta_\omega=+\infty$, then $N=\Ccount=0$. If $\delta=1$, then
\begin{equation}\label{eq:flaglower}
 N\ge \sum_{\omega\in\Omega}Q^{\max\{0,n-w_\omega\}}-(M-1).
\end{equation}
\end{theorem}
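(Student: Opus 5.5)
The plan is to work slice by slice, exactly as in the proof of Theorem~\ref{thm:congruence}, and then sum using the integer identity \eqref{eq:countidentity}. For a fixed $\omega$ I will show that $Q^{\eta_\omega}\mid Z_\omega$, where $Q^{+\infty}\mid Z_\omega$ is read as $Z_\omega=0$. Then $\Ccount=\sum_\omega Z_\omega$ is a sum of integers each divisible by $Q^\eta$, which gives the congruence. If every $\eta_\omega=+\infty$, then every $Z_\omega=0$, so $\Ccount=0$, and hence $N=0$ since $N\le\Ccount$.

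For the per-slice claim I treat the three cases of \eqref{eq:localexponent} separately.

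\emph{Case $C_{0,\omega}\notin V_{1,\omega}$.} The remark before the theorem shows that $P_\omega$ never vanishes on $K^n$, so $Z_\omega=0$.

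\emph{Case $P_\omega=0$.} Reduction preserves values, so every point is a zero and $Z_\omega=Q^n$.

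\emph{Remaining case.} Here $P_\omega$ is nonconstant and $C_{0,\omega}\in V_{1,\omega}$. I apply Lemma~\ref{lem:flagminimal} to get a $K$-basis of $W=F^r$ adapted to the flag. In those coordinates the nonzero scalar coordinates of $P_\omega$ have degrees summing to $w_\omega$, with maximum $D_\omega$, and the coordinates outside $V_{1,\omega}$ are identically zero. Since values lie in $K$ on $K^n$, the common zeros of the $f_i$ on the slice are exactly the common zeros of these nonzero scalar coordinates.

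Some of these coordinates might be nonzero constants. That cannot happen: a coordinate with no positive-degree part would lie outside the initial segment spanning $V_{1,\omega}$, and such coordinates are identically zero. So all remaining coordinates are nonconstant, and the Ax--Katz bound \eqref{eq:classicalaxkatz} gives $Q^{\eta_\omega}\mid Z_\omega$.

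Next I prove $\eta\ge\mu$ by comparing $w_\omega,D_\omega$ with $E,D$.

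\emph{Comparing $D_\omega$ with $D$.} Reduction does not increase degree, so $D_\omega\le D$.

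\emph{Comparing $w_\omega$ with $E$.} The untwisted coordinate system from \eqref{eq:coordinates}, after reduction, gives one particular choice of $K$-coordinates on $W$. Its positive scalar degrees sum to at most $E$. By the minimality part of Lemma~\ref{lem:flagminimal}, $w_\omega\le E$. This comparison uses only the inequality $\sum_e s_e\ge\sum_e\dim V_e$ from the first half of that lemma's proof, which does not need $C_0\in V_1$.

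\emph{Conclusion.} If $\mu=0$ there is nothing to prove. Otherwise $n-w_\omega\ge n-E>0$, and the ceiling comparison goes exactly as in Theorem~\ref{thm:congruence}. For $\eta_\omega=n$ we have $\mu\le n$, and infinite values are excluded from the minimum. Hence $\eta_\omega\ge\mu$ for every $\omega$ with finite $\eta_\omega$.

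For the lower bound \eqref{eq:flaglower}, assume $\delta=1$. Then every slice contains the origin as a zero. So $C_{0,\omega}=P_\omega(0)=0\in V_{1,\omega}$, and no slice falls in the infinite case.

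\emph{Slices with $P_\omega=0$.} Here $Z_\omega=Q^n=Q^{\max\{0,n-w_\omega\}}$, since $w_\omega=0$.

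\emph{Other slices.} The adapted coordinates give a nonempty system of nonconstant scalar equations with degree sum $w_\omega$. Warning's bound gives $Z_\omega\ge Q^{n-w_\omega}$ when $w_\omega\le n$. When $w_\omega>n$, the bound $Z_\omega\ge1=Q^0$ comes from the origin.

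Summing over $\omega$ in \eqref{eq:countidentity} gives $N=\Ccount-(M-1)\ge\sum_\omega Q^{\max\{0,n-w_\omega\}}-(M-1)$.

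The main obstacle I expect is the bookkeeping around Lemma~\ref{lem:flagminimal}. I must check that the adapted basis produces no nonzero constant coordinates, and that the minimality comparison $w_\omega\le E$ holds even though the lemma is stated under $C_0\in V_1$. I will resolve both by quoting the two halves of its proof separately, as above.
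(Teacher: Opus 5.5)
Your proposal is correct and follows the paper's proof. Like the paper, you apply Ax--Katz and Warning slice by slice to the flag-adapted scalar system from Lemma~\ref{lem:flagminimal}, compare $w_\omega\le E$ and $D_\omega\le D$, and sum via \eqref{eq:countidentity}. Your extra bookkeeping fills in details the paper leaves implicit: that the adapted basis has no nonzero constant coordinates, and that $w_\omega\le E$ needs only the first half of the lemma's proof.
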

\begin{proof}
A certified empty slice contributes $0$. An identically zero $P_\omega$ has $Z_\omega=Q^n$. In all other cases Lemma~\ref{lem:flagminimal} gives an equivalent scalar system whose degree sum is $w_\omega$ and whose maximum degree is $D_\omega$. Apply \eqref{eq:classicalaxkatz} and sum via \eqref{eq:countidentity}.

In the original $rh$ scalar coordinates, reduction leaves degree at most $d_i$ in each coordinate of the $i$th equation. Lemma~\ref{lem:flagminimal} implies $w_\omega\le E$, and reduction also gives $D_\omega\le D$. When $\mu>0$, $n>E\ge w_\omega$, and hence
\[
 \frac{n-w_\omega}{D_\omega}\ge\frac{n-E}{D}.
\]
Taking ceilings proves $\eta_\omega\ge\mu$ for each nonconstant slice under consideration. When $\mu=0$ this is automatic. The value $n$ for an identically zero polynomial also satisfies this inequality.

If $\delta=1$, the reduced constant vector is $0$, because reduction preserves the value at the origin. Every slice is nonempty. Warning's bound gives $Z_\omega\ge Q^{n-w_\omega}$ when $w_\omega\le n$, and nonemptiness gives $Z_\omega\ge1$ otherwise. This proves \eqref{eq:flaglower}.
\end{proof}

\begin{remark}\label{rem:flagoptimality}
For a fixed nonconstant $P_\omega$ with $C_{0,\omega}\in V_{1,\omega}$, Lemma~\ref{lem:flagminimal} minimizes the scalar degree sum under all invertible $K$-linear changes of output coordinates, while the maximum degree stays fixed. Thus it gives the largest exponent obtainable by applying the displayed classical Ax--Katz bound after such a change. It does not optimize over nonlinear replacements of equations or predict cancellation between distinct integer slice counts.
\end{remark}

For example, let $v_1,v_2$ be independent output vectors and consider the reduced vector polynomial
\[
 P(u_1,u_2)=v_1u_1^3+(v_1+v_2)u_2,
\]
over a fixed field with $Q\ge4$. Here $V_1$ has dimension $2$, while $V_2$ and $V_3$ have dimension $1$. Therefore $w_P=2+1+1=4$. In the basis $v_1,v_2$, the scalar equations are $u_1^3+u_2$ and $u_2$, of degrees $3$ and $1$. Their sum is $4$, as predicted. A different output basis can make both scalar equations have degree $3$, giving the less efficient sum $6$. The lemma identifies one basis that achieves all the required degree savings simultaneously.

All the spaces $V_{e,\omega}$ can be computed as column spaces of coefficient matrices over $K$. Computing their ranks does not require evaluation at the $Q^n$ input points. The method still processes $M$ slices and the polynomial representations; no complexity bound polynomial in the binary encoding of the field size is asserted.

\subsection{Homogeneous systems}
For homogeneous $f_i$, the reduced vector polynomial on each slice is obtained from the one at $\omega=1$ by the invertible $K$-linear map
\[
 (t_1,\ldots,t_r)\longmapsto
 (A_{d_1}(\omega)t_1,\ldots,A_{d_r}(\omega)t_r).
\]
Reduction commutes with this output map. Hence $D_\omega$ and all the dimensions in \eqref{eq:flag} are independent of the slice. This statement concerns the dimensions; the actual subspaces of $F^r$ can change under the map.

A simpler invariant that preserves homogeneity will be useful in Section~\ref{sec:nonlinear}. Before reduction, expand each $f_i$ in a $K$-basis of $F$. For each occurring degree $d$, let $t_d$ be the dimension of the $K$-span of all scalar coordinate forms of degree $d$, and set
\begin{equation}\label{eq:homoeffective}
 E_{\rm hom}=\sum_d d\,t_d.
\end{equation}
This does not depend on the chosen basis of $F$. Select a basis of these forms separately in each degree. Their common zero set is $X_K$, and their degree sum is $E_{\rm hom}\le E$.

\begin{corollary}\label{cor:homoeffective}
For a homogeneous system, set $B=E_{\rm hom}$ and $\mu_B=\max\{0,\lceil(n-B)/D\rceil\}$. Then $Q^{\mu_B}\mid N+M-1$. If $n\ge B$, then
\[
 N\ge1+M(Q^{n-B}-1).
\]
\end{corollary}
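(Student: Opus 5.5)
The plan is to reduce to the homogeneous transfer of Proposition~\ref{prop:transfer} and then apply the classical inputs \eqref{eq:classicalaxkatz} and Warning's bound to a single ordinary scalar system on $K^n$. For a homogeneous system, \eqref{eq:homotransform} shows that every slice has the same ordinary zero set $X_K$, and \eqref{eq:homocount} gives $N+M-1=MZ$ with $Z=|X_K|$. Since $M\equiv1\pmod p$, divisibility of $MZ$ by a power of $Q$ is equivalent to divisibility of $Z$. Both assertions of the corollary therefore follow once we control $Z$.

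First I would fix a $K$-basis $\beta_1,\dots,\beta_h$ of $F$ and expand $f_i=\sum_j\beta_j g_{ij}$. Each $g_{ij}$ is a homogeneous $K$-form of degree $d_i$, or zero. As in the proof of Theorem~\ref{thm:congruence}, $X_K$ is the common zero set of all the $g_{ij}$. For each degree $d$, I group the forms $g_{ij}$ with $d_i=d$ and choose a basis $h_{d,1},\dots,h_{d,t_d}$ of their $K$-span. Every $g_{ij}$ of degree $d$ is a $K$-combination of these basis forms, and conversely each basis form is a combination of the $g_{ij}$. Hence the new system has the same common zero set on $K^n$. This uses only linear algebra within each degree. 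The new system consists of nonzero homogeneous forms of exact degrees $d$, with degree sum $\sum_d d\,t_d=B$ and maximum degree at most $D$. Moreover $t_d\le h\cdot\#\{i:d_i=d\}$, so $B\le E$. Independence of $B$ from the choice of basis of $F$ is already asserted, and in any case is not needed for the conclusion.

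Next I would treat the cases. If no forms remain (all $g_{ij}=0$), then all $f_i=0$, which the positive-degree convention excludes. In that degenerate case $Z=Q^n$, and both claims hold trivially. Otherwise I apply \eqref{eq:classicalaxkatz} to the new system, with degree sum $B$ and maximum $d'\le D$. When $n>B$ we have $\lceil(n-B)/d'\rceil\ge\lceil(n-B)/D\rceil$, exactly as in the comparison in Theorem~\ref{thm:congruence}, so $Q^{\mu_B}\mid Z$ and hence $Q^{\mu_B}\mid MZ=N+M-1$. For the lower bound, homogeneity puts the origin in $X_K$, so the system is nonempty. If $n\ge B$, Warning's bound gives $Z\ge Q^{n-B}$, and \eqref{eq:homocount} yields $N=1+M(Z-1)\ge1+M(Q^{n-B}-1)$.

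The only step needing care is the equality of zero sets after the per-degree basis change. This is immediate because spans are taken among forms of the same degree, so homogeneity and exact degrees are preserved. Mixing degrees is exactly what is avoided, which is why $E_{\rm hom}$ is defined degree by degree rather than via the reduced filtration of Section~\ref{sec:filtration}.
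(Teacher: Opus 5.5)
Your proof is correct and follows the paper's route. Like the paper, you select a per-degree $K$-basis of the scalar coordinate forms, apply Ax--Katz and Warning's bound to that single scalar system, and transfer the result through $N+M-1=MZ$. Where you only use that the maximum degree satisfies $d'\le D$, the paper notes $t_D\ge1$ so that it equals $D$; this difference is harmless, since your ceiling comparison goes in the right direction.
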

\begin{proof}
Apply the classical estimates to the selected homogeneous scalar forms and use \eqref{eq:homocount}. Their maximum degree is $D$, since $t_D\ge1$.
\end{proof}
No all-parameter sharpness for a prescribed value of $E_{\rm hom}$ is asserted.

\section{Bounds for nonlinear fixed-field zero sets}\label{sec:nonlinear}

We now record an application of known refinements of Warning's bound. The hypothesis is on the ordinary fixed-field zero set $X_K$, not on linearity of the skew zero set over $F$.

\begin{samepage}
\begin{proposition}[Transfer of Leep--Petrik bounds]\label{prop:nonlinear}
Suppose the original skew system is homogeneous. Let $B=E_{\rm hom}$, assume $n>B$, and suppose $X_K$ is not a $K$-linear subspace. Then
\begin{equation}\label{eq:nonlinear2}
 N\ge1+M\bigl(2^{n-B}+1\bigr)\qquad(Q=2),
\end{equation}
and
\begin{equation}\label{eq:nonlinearq}
 N\ge1+M\bigl(2Q^{n-B}+(Q-2)Q-1\bigr)
 \qquad(Q\ge3).
\end{equation}
\end{proposition}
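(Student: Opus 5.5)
The plan is to reduce everything to the homogeneous transfer identity of Proposition~\ref{prop:transfer}. Since the system is homogeneous, every slice has the same ordinary zero set $X_K$, and $N=1+M(Z-1)$ with $Z=|X_K|$. So \eqref{eq:nonlinear2} is equivalent to $Z\ge 2^{n-B}+2$ when $Q=2$, and \eqref{eq:nonlinearq} is equivalent to $Z\ge 2Q^{n-B}+(Q-2)Q$ when $Q\ge3$. Both target inequalities concern only the ordinary zero set of a scalar system over $K$.

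First, I realize $X_K$ as the common zero set of a scalar system with degree sum exactly $B$. As recorded after \eqref{eq:homoeffective}, I expand each $f_i$ in a $K$-basis of $F$ and select, separately in each occurring degree $d$, a basis of the $t_d$-dimensional $K$-span of the scalar coordinate forms of degree $d$. Vanishing of $f_i$ at $u\in K^n$ is equivalent to vanishing of all its scalar coordinates, by independence of the $\beta_j$. Each coordinate form of degree $d$ is a $K$-combination of the selected forms of that degree, and conversely. Hence the selected homogeneous forms have common zero set exactly $X_K$ and degree sum $B=E_{\rm hom}$. These forms are nonconstant, since each $d\ge1$. Their common zero set contains the origin, so it is nonempty, and $n>B$ by hypothesis.

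Next, I invoke the Leep--Petrik refinement of Warning's second theorem, \cite[Theorem~1.1]{LP23}, in the form for nonempty zero sets that are not affine subspaces. That result should give a zero count of at least $2^{n-B}+2$ when $Q=2$, and at least $2Q^{n-B}+(Q-2)Q$ when $Q\ge3$. Here $X_K$ contains $0$, so it is an affine subspace if and only if it is a $K$-linear subspace, and the hypothesis excludes this. Substituting these bounds for $Z$ into $N=1+M(Z-1)$ gives exactly the two displayed inequalities. For example, $Z-1\ge 2^{n-B}+1$ yields \eqref{eq:nonlinear2}.

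The main obstacle is matching the cited statement precisely. I need to confirm the exact form of the Leep--Petrik bounds: whether they require $n>B$ or $n\ge B$, whether the constants are as written in both regimes $Q=2$ and $Q\ge3$, and whether the hypothesis is ``not an affine subspace'' or ``not a linear subspace''. I would first check the statement in \cite{LP23} against these expressions. If the nonlinearity hypothesis there is phrased affinely, the remark that $0\in X_K$ bridges the gap. Everything else is bookkeeping already carried out in Proposition~\ref{prop:transfer} and Corollary~\ref{cor:homoeffective}.
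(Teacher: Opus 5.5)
Your proposal is correct and is essentially the paper's proof. Like the paper, you realize $X_K$ by the homogeneous scalar system of degree sum $E_{\rm hom}$, use $0\in X_K$ to turn ``not linear'' into ``not affine'', apply the Leep--Petrik estimates $Z\ge 2^{n-B}+2$ for $Q=2$ and $Z\ge 2Q^{n-B}+(Q-2)Q$ for $Q\ge3$, and substitute into $N=1+M(Z-1)$.

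The only correction is the reference. The paper takes these two estimates from \cite[Theorem~1.4(1),(4)]{LP23}, not from Theorem~1.1, whose part (3) the paper cites only for Warning's bound itself.
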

\end{samepage}
\begin{proof}
Choose the homogeneous scalar system described after \eqref{eq:homoeffective}. Since $0\in X_K$, the assumption means that $X_K$ is not an affine subspace. Apply \cite[Theorem~1.4(1),(4)]{LP23}, respectively, to this scalar system. The resulting estimates are $Z\ge2^{n-B}+2$ for $Q=2$ and $Z\ge2Q^{n-B}+(Q-2)Q$ for $Q\ge3$. Substitute them into \eqref{eq:homocount}.
\end{proof}

\begin{remark}
The inequalities used in Proposition~\ref{prop:nonlinear} are classical inputs, not additional classical claims of this paper. Leep and Petrik's later correction \cite{LP24} concerns the proof of \cite[Theorem~1.4(3)]{LP23}; that third estimate is not used here.
\end{remark}

\begin{example}[A sharp quadratic example]\label{ex:quadratic}
The classical form $g=x_1x_2-x_3^2$ from \cite[Section~4]{LP23}, regarded as a skew form with coefficients in $K$, has $B=2$ in three variables. Its ordinary zero count is
\[
 Z=(2Q-1)+(Q-1)^2=Q^2.
\]
Here $x_3=0$ accounts for $2Q-1$ points, and each of its $Q-1$ nonzero values accounts for $Q-1$ points. The zero set is not a subspace: it contains $(1,0,0)$ and $(0,1,0)$ but not their sum. Thus
\[
 N=1+M(Q^2-1)
\]
gives equality in \eqref{eq:nonlinear2} for $Q=2$ and in \eqref{eq:nonlinearq} for $Q\ge3$, for every $h$. This example establishes sharpness under these hypotheses at the displayed parameters, not for every prescribed degree table.
\end{example}

\section{Examples and exact verification}\label{sec:examples}

\begin{example}[Failure of the uncorrected higher congruence]\label{ex:linear}
Take $F=\F_4=\F_2[\alpha]/(\alpha^2+\alpha+1)$ and $\sigma(t)=t^2$. In seven variables, let $f=x_1+\alpha x_2$. Its ordinary zero set on $K^7$ is given by $u_1=u_2=0$, so $Z=2^5$. Since $M=3$,
\[
 N=1+3(32-1)=94,\qquad\Ccount=N+2=96=3\cdot2^5.
\]
Here $E=2$, $D=1$, and $\mu=5$. Thus $2^5$ divides the corrected count and does not divide the raw count. More generally, in $n\ge2$ variables the same form has $N=1+3(2^{n-2}-1)$.
\end{example}

\begin{example}[Strict coefficient-filtration improvements]\label{ex:flag}
Over the same extension, the seven-variable skew form $f=x_1^2$ has the uniform exponent $\mu=2$. On a slice its reduced polynomial function is $A_2(\omega)u_1$, since $u_1^2=u_1$ on $\F_2$. Hence $D_\omega=w_\omega=1$ and Theorem~\ref{thm:flag} gives exponent $6$. In fact $N=190$ and $\Ccount=192=3\cdot2^6$.

In six variables, take $f=x_1^2-x_1$. The slice with $\omega=1$ has $64$ zeros, and the other two have $32$ each. Their local exponents in \eqref{eq:localexponent} are $6,5,5$. Theorem~\ref{thm:flag} therefore gives exponent $5$, whereas $\Ccount=128=2^7$. This is a concrete instance of additional divisibility caused by summing slice counts.
\end{example}

\begin{example}[A cubic attaining the uniform exponent]\label{ex:cubic}
Over $K=\F_2$, a norm form for the extension defined by $z^3+z^2+1$ is
\[
 H(u,v,w)=u^3+u^2v+u^2w+uvw+v^3+v^2w+w^3.
\]
It is $1$ on every nonzero point of $K^3$. Over $F=\F_4$, let
\begin{align*}
 f={}&H(x_1,x_2,x_3)\\
 &+\alpha\bigl(H(x_4,x_5,x_6)+H(x_7,x_8,x_9)
                +H(x_{10},x_{11},x_{12})\bigr).
\end{align*}
The first block is forced to vanish. For the other three blocks, Lemma~\ref{lem:normsum} gives $A_3=(2^9+(-6)^3)/2=148$. Therefore
\[
 N=442,\qquad\Ccount=444,\qquad\vp(\Ccount)=2=\mu.
\]
\end{example}

\subsection{Reproducible finite-field calculations}
The accompanying supplement contains the finite-field implementation, the verification scripts, the exact input data and the complete output records. The main suite checks 1,104 systems and 216 norm-construction parameter configurations, with 29,618 successful assertions. Of the systems, 972 are exhaustive linear cases: all 252 nonconstant affine linear polynomials in three variables over $\F_4$ and all 720 in two variables over $\F_9$. Another 98 are sparse systems generated with the fixed seed $20261024$.

The supplement keeps the two original verification suites separate so that their reported totals can be traced to their respective scripts. The first suite concerns integer congruences and norm constructions. A second, standard-library CNS suite checks 4,825 exhaustive polynomials, 432 sparse parameter tests and 90 zero-count systems, with 80,975 successful assertions. It checks the coefficient formula and indicator pullback, including different automorphism generators and nonprime fixed fields. Its totals are not added to the first suite, since they count different kinds of tests.

The principal counting methods in the first suite are independent: one enumerates compatible points by their first nonzero coordinate and uses ordered skew monomial evaluation; the other enumerates fixed-field slices and uses ordinary polynomial evaluation. In small dimensions, the first set is also compared with the set obtained by filtering all of $F^n$ using \eqref{eq:compatible}. Arithmetic and zero counts are exact. The checks include nonprime fixed fields and different generators of the same Galois group. Norm constructions are additionally checked by integer convolution, including every residue case in Lemma~\ref{lem:singleform} for $Q\in\{2,3,4,5\}$, $2\le D\le4$ and $1\le L\le6$.

A separate audit checks 63 boundary systems, including the identity automorphism and constant reduced functions. For 60 coefficient examples it enumerates 2,700 invertible output changes in small general linear groups and verifies the minimum degree sum in Lemma~\ref{lem:flagminimal}. These counts are reported separately from the main suite. The calculations check instances and the implementation; the general assertions are proved above. Table~\ref{tab:examples} gives some recorded homogeneous examples over $\F_4/\F_2$.

\begin{table}[ht]
\centering
\caption{Selected exact counts over $\F_4/\F_2$; each system has one skew equation.}\label{tab:examples}
\begin{tabular}{rrrrrr}
\toprule
$D$ & $n$ & $E$ & $N$ & $\Ccount=N+2$ & $\mu$\\
\midrule
1 & 7 & 2 & 94 & 96 & 5\\
2 & 4 & 4 & 1 & 3 & 0\\
2 & 6 & 4 & 28 & 30 & 1\\
2 & 8 & 4 & 82 & 84 & 2\\
3 & 12 & 6 & 442 & 444 & 2\\
3 & 13 & 6 & 886 & 888 & 3\\
3 & 14 & 6 & 3718 & 3720 & 3\\
4 & 12 & 8 & 676 & 678 & 1\\
\bottomrule
\end{tabular}
\end{table}

\section{Further questions}

Theorem~\ref{thm:flag} does not determine the exact valuation of an arbitrary corrected count: Example~\ref{ex:flag} shows that summing slices can increase it. An input-sensitive description of this extra divisibility would require information beyond the separate scalar Ax--Katz bounds.

The origin hypothesis in Theorem~\ref{thm:lower} makes every slice nonempty. When $\delta=0$, a nonempty skew zero set can meet only some slices; an optimal degree-only cardinality classification in that situation is not obtained here. The present arguments also use a common automorphism and no derivations. Extensions to other multivariate Ore rings require a compatible evaluation space and a replacement for Lemma~\ref{lem:slices}.

\appendix
\section{The shortest fixed-grid form of the argument}

There is an even shorter specialization useful for checking the proof. On $K^n$ one has $\T_1=\operatorname{id}$ on coefficient arrays, and $\rk_\sigma(K)=Q$. Given an ordinary polynomial $H$ of degree less than $n(Q-1)$, reduce it modulo the $t_i^Q-t_i$ to $\overline H$. Put
\[
 \Delta_u(t)=\prod_{i=1}^n\bigl(1-(t_i-u_i)^{Q-1}\bigr).
\]
The two reduced polynomials $\overline H$ and $\sum_{u\in K^n}H(u)\Delta_u$ agree on $K^n$. Regard them as skew polynomials with left coefficients and invoke Corollary~\ref{c:cor:unique}, using the original automorphism $\sigma$. Equality follows. The coefficient of $t_1^{Q-1}\cdots t_n^{Q-1}$ on the first side is zero and on the second side is $(-1)^n\sum_u H(u)$. Thus the sum is zero. Apply this directly to the indicators in Section~\ref{c:sec:deduction}, and use the integer slice-count identity. The slice coefficient formula in the main text makes the same deduction explicit on every nontrivial $\omega$-grid.

\section{An elementary proof of Warning's lower bound}\label{app:warning}

We include a proof of the classical input to make the lower-bound arguments self-contained. Nothing in this appendix is asserted to be a new form of Warning's theorem.

\begin{lemma}[Grid support bound]\label{lem:grid}
Let $0\ne P\in K[u_1,\ldots,u_n]$ have degree at most $Q-1$ in each variable. Choose its leading monomial by first taking the largest exponent of $u_n$, then the largest exponent of $u_{n-1}$ in that coefficient, and so on. If this monomial is $u^\gamma$, then
\[
 |\supp(P)|\ge\prod_{i=1}^n(Q-\gamma_i),
 \qquad \supp(P)=\{u\in K^n:P(u)\ne0\}.
\]
\end{lemma}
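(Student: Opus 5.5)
I will argue by induction on $n$, peeling off the last variable, which is the one that governs the chosen leading monomial. Write
\[
 P=\sum_{j=0}^{\gamma_n}P_j(u_1,\ldots,u_{n-1})\,u_n^{j},
\]
where $\gamma_n$ is the largest exponent of $u_n$ occurring in $P$. By the way the leading monomial is selected, $P_{\gamma_n}\ne0$, and its own leading monomial, chosen by the same rule in the variables $u_{n-1},\ldots,u_1$, is $u_1^{\gamma_1}\cdots u_{n-1}^{\gamma_{n-1}}$. Each $P_j$ has degree at most $Q-1$ in every variable, so the induction hypothesis applies to $P_{\gamma_n}$. Hence the set $S'=\supp(P_{\gamma_n})\subseteq K^{n-1}$ has at least $\prod_{i<n}(Q-\gamma_i)$ points.

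For the base case $n=1$, $P$ is a nonzero univariate polynomial of degree $\gamma_1\le Q-1$. It therefore has at most $\gamma_1$ roots in $K$, so its support has at least $Q-\gamma_1$ points.

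For the inductive step, fix $v\in S'$ and consider the univariate polynomial $P(v,u_n)=\sum_j P_j(v)u_n^j$. Its coefficient of $u_n^{\gamma_n}$ is $P_{\gamma_n}(v)\ne0$, so it is nonzero of degree exactly $\gamma_n\le Q-1$. It thus has at most $\gamma_n$ roots, and at least $Q-\gamma_n$ values of $u_n\in K$ give $P(v,u_n)\ne0$. The fibres over distinct $v$ are disjoint. Summing over $v\in S'$ gives
\[
 |\supp(P)|\ge |S'|\,(Q-\gamma_n)\ge\prod_{i=1}^n(Q-\gamma_i).
\]
Points lying over $v\notin S'$ are simply discarded, which only weakens the count.

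The one point needing care is the compatibility of the leading-monomial rule with the induction. The statement takes the largest power of $u_n$ first and then recurses into its coefficient. This is precisely the statement that the leading monomial of $P_{\gamma_n}$ in the remaining variables is $(\gamma_1,\ldots,\gamma_{n-1})$. So the recursion is well defined and matches the hypothesis at the next level.

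The individual-degree bound $\le Q-1$ is not essential for the root count; it merely ensures each factor $Q-\gamma_i$ is positive, so the bound is meaningful. I expect no real obstacle beyond stating this bookkeeping cleanly.
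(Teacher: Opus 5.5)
Your proof is correct and is the same induction as the paper's: peel off $u_n$, apply the induction hypothesis to the leading coefficient $P_{\gamma_n}$, and use the univariate root bound on each fibre over $\supp(P_{\gamma_n})$. One small correction to your closing remark: the bound $\gamma_i\le Q-1$ is essential. It gives $Q-\gamma_n\ge1$, which is what lets you multiply the two inequalities. Without it the statement can fail: for $Q=2$, $P=u_1^3u_2^3-u_1u_2$ vanishes on all of $K^2$, while $\prod_i(Q-\gamma_i)=1$.
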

\begin{proof}
For one variable this is the root bound. For $n>1$, write $P=\sum_{j=0}^{\gamma_n}P_j(u_1,\ldots,u_{n-1})u_n^j$ with $P_{\gamma_n}\ne0$. By induction, this leading coefficient is nonzero at at least $\prod_{i<n}(Q-\gamma_i)$ points. Over each such point the remaining nonzero univariate polynomial has at least $Q-\gamma_n$ nonzero values. Multiplication proves the claim.
\end{proof}

\begin{proposition}[Warning's bound]\label{prop:warningproof}
Let $g_1,\ldots,g_s\in K[u_1,\ldots,u_n]$ have a common zero, and suppose their positive degree sum is at most an integer $B\le n$. Then they have at least $Q^{n-B}$ common zeros.
\end{proposition}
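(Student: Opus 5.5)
We follow the standard Alon--Füredi/Heath-Brown style route, using Lemma~\ref{lem:grid} as the only combinatorial input. First translate the common zero to the origin by substituting $u\mapsto u+c$. This preserves degrees, so we may assume all $g_j(0)=0$. Constant polynomials are then zero, and they may be deleted. Let $b_j=\deg g_j$, so that $\sum_j b_j\le B$. Form the ordinary indicator
\[
 P(u)=\prod_{j=1}^s\bigl(1-g_j(u)^{Q-1}\bigr),
\]
which is $1$ exactly on the common zero set $X$ and $0$ elsewhere. Reduce it modulo $(u_1^Q-u_1,\ldots,u_n^Q-u_n)$ to $\overline P$. Reduction preserves values and does not increase total degree, so $\deg\overline P\le(Q-1)B$. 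Each individual degree of $\overline P$ is at most $Q-1$. Also $\overline P\ne0$, since $P(0)=1$. Then $|X|=|\supp(\overline P)|$.

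Apply Lemma~\ref{lem:grid} to $\overline P$. Its leading monomial $u^\gamma$ has $0\le\gamma_i\le Q-1$ and $\sum_i\gamma_i\le(Q-1)B$. This yields
\[
 |X|\ge\prod_{i=1}^n(Q-\gamma_i).
\]
The problem is now to minimize $\prod_i(Q-\gamma_i)$ subject to these constraints. Put $y_i=Q-\gamma_i\in\{1,\ldots,Q\}$. Then $\sum_i(Q-y_i)\le(Q-1)B$. We claim that $\prod_i y_i\ge Q^{n-B}$. The key elementary inequality is $y\ge Q^{(y-1)/(Q-1)}$ for $1\le y\le Q$: it holds with equality at both endpoints, and the right side is convex in $y$. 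Multiplying these inequalities gives
\[
 \prod_i y_i\ge Q^{\sum_i(y_i-1)/(Q-1)}.
\]
The exponent equals $\bigl(n(Q-1)-\sum_i(Q-y_i)\bigr)/(Q-1)\ge n-B$. This proves the claim.

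The main obstacle is the optimization step, together with checking that Lemma~\ref{lem:grid} applies to the reduced polynomial, with its specific leading-monomial choice. The reduced polynomial is needed because the lemma requires individual degrees at most $Q-1$. The leading monomial's total degree is bounded by the total degree of $\overline P$ however it is selected, so the constraint $\sum_i\gamma_i\le(Q-1)B$ holds. The convexity inequality handles all the integer cases uniformly, so no case split is required. The hypothesis $B\le n$ only ensures that the bound is meaningful; if $B$ exceeds the actual degree sum, the bound only weakens, which is consistent with the statement.
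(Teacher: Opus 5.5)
Your proof is correct and follows the paper's argument: the same indicator $\prod_j(1-g_j^{Q-1})$, its reduction modulo the $u_i^Q-u_i$, and Lemma~\ref{lem:grid} applied to the reduced polynomial. Translating the common zero to the origin is harmless but unnecessary, since nonemptiness of the zero set already gives $\overline P\ne0$. The only real difference is the final optimization. Your chord inequality $y\ge Q^{(y-1)/(Q-1)}$ on $[1,Q]$ replaces the paper's exchange step $(s,t)\mapsto(s-1,t+1)$ and handles all integer cases at once, without first raising the exponent sum to exactly $(Q-1)B$.
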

\begin{proof}
Delete zero equations; a nonzero constant cannot occur under the hypothesis. The polynomial
\[
 P(u)=\prod_{j=1}^s(1-g_j(u)^{Q-1})
\]
is the indicator of the common zero set on $K^n$. Its canonical reduction is nonzero, has total degree at most $(Q-1)B$, and has degree at most $Q-1$ in each variable. Let $\gamma$ be the monomial selected in Lemma~\ref{lem:grid}. Then $\sum_i\gamma_i\le(Q-1)B$.

It remains to minimize $\prod_i(Q-\gamma_i)$ over integers $0\le\gamma_i\le Q-1$ with this sum constraint. Increasing an exponent does not increase the product, so a minimum can be chosen with total sum $(Q-1)B$. If two exponents satisfy $0<s\le t<Q-1$, replacing $(s,t)$ by $(s-1,t+1)$ decreases their contribution: the difference between the new and old products is
\[
 (Q-s+1)(Q-t-1)-(Q-s)(Q-t)=s-t-1<0.
\]
Repeating this operation leaves at most one exponent strictly between $0$ and $Q-1$. Since the total sum is a multiple of $Q-1$, none remains. Thus the minimum is achieved at $B$ exponents equal to $Q-1$ and $n-B$ equal to $0$, giving $Q^{n-B}$. Lemma~\ref{lem:grid} completes the proof.
\end{proof}

\end{document}